\theoremstyle{definition}
\numberwithin{equation}{section}
\renewcommand{\setminus}{\smallsetminus}
\newcommand{\s}{\sigma}
\newcommand{\Rad}{{\mathrm{\bf Rad}}}
\newcommand\remove[1]{}
\renewcommand{\hat}{\widehat}
\renewcommand{\c}{{\{-1,1\}^n}}
\newcommand{\rnote}[1]{}
\newcommand{\jnote}[1]{}
\renewcommand{\d}{\delta}
\newcommand{\n}{\{1,\ldots,n\}}
\def\cprime{$'$}
\newcommand{\1}{\mathbf{1}}
\newcommand{\e}{\varepsilon}
\newcommand{\R}{\mathbb{R}}
\renewcommand{\P}{\mathbb{P}}
\newcommand{\N}{\mathbb{N}}
\renewcommand{\subset}{\subseteq}
\newtheorem{theorem}{Theorem}[section]
\newtheorem{lemma}[theorem]{Lemma}
\newtheorem{corollary}[theorem]{Corollary}
\newtheorem{definition}[theorem]{Definition}
\newtheorem{remark}{Remark}[section]
\newcommand{\eqdef}{\stackrel{\mathrm{def}}{=}}
\date{}
\renewcommand{\le}{\leqslant}
\renewcommand{\ge}{\geqslant}
\renewcommand{\leq}{\leqslant}
\renewcommand{\epsilon}{\varepsilon}
\theoremstyle{remark}
\renewcommand{\phi}{\varphi}
\begin{document}


\baselineskip=17pt


\title{Pisier's inequality revisited}

\author{Tuomas Hyt\"onen\\
Department of Mathematics and Statistics\\
University of Helsinki\\
P.O.B. 68,
FI-00014 Helsinki, Finland\\
E-mail: tuomas.hytonen@helsinki.fi \and
Assaf Naor\\
Courant Institute of Mathematical Sciences\\
New York University\\
251 Mercer Street, New York NY, 10012, USA\\
E-mail: naor@cims.nyu.edu}

\date{}

\maketitle


\renewcommand{\thefootnote}{}



\renewcommand{\thefootnote}{\arabic{footnote}}
\setcounter{footnote}{0}


\begin{abstract}
Given a Banach space $X$, for $n\in \N$ and $p\in (1,\infty)$ we
investigate the smallest constant $\mathfrak P\in (0,\infty)$  for
which every $n$-tuple of functions $f_1,\ldots,f_n:\{-1,1\}^n\to X$
satisfies
\begin{multline*}
\int_{\{-1,1\}^n}\Bigg\|\sum_{j=1}^n
\partial_jf_j(\e)\Bigg\|^pd\mu(\e)\\\le
\mathfrak{P}^p\int_{\{-1,1\}^n}\int_{\{-1,1\}^n}\Bigg\|\sum_{j=1}^n
\d_j\Delta f_j(\e)\Bigg\|^pd\mu(\e) d\mu(\delta),
\end{multline*}
where $\mu$ is the uniform probability measure on the discrete
hypercube $\{-1,1\}^n$ and $\{\partial_j\}_{j=1}^n$ and
$\Delta=\sum_{j=1}^n\partial_j$ are the hypercube partial
derivatives and the hypercube Laplacian, respectively. Denoting this
constant by $\mathfrak{P}_p^n(X)$, we show that
$$\mathfrak{P}_p^n(X)\le \sum_{k=1}^{n}\frac{1}{k}$$ for every Banach
space $(X,\|\cdot\|)$. This extends the classical Pisier inequality,
which corresponds to the special case $f_j=\Delta^{-1}\partial_j f$
for some $f:\{-1,1\}^n\to X$. We show that $\sup_{n\in
\N}\mathfrak{P}_p^n(X)<\infty$ if either the dual $X^*$ is a
$\mathrm{UMD}^+$ Banach space, or for some $\theta\in (0,1)$ we have
$X=[H,Y]_\theta$, where $H$ is a Hilbert space and $Y$ is an
arbitrary Banach space. It follows that $\sup_{n\in
\N}\mathfrak{P}_p^n(X)<\infty$ if $X$ is a Banach lattice of
nontrivial type.
\end{abstract}

\section{Introduction}\label{sec:intro}

Fix a Banach space $(X,\|\cdot\|)$ and $n\in \N$. For every
$f:\{-1,1\}^n\to X$ and $j\in \{1,\ldots,n\}$ the hypercube $j$th
partial derivative of $f$, which is denoted $\partial_j f:\{-1,1\}^n\to X$,
is defined as
\begin{equation}
\partial_jf(\e)\eqdef \frac{f(\e)-f(\e_1,\ldots,\e_{j-1},-\e_j,\e_{j+1},\ldots,\e_n)}{2}.
\end{equation}
The hypercube Laplacian of $f$, denoted $\Delta f:\{-1,1\}^n\to X$,
is
\begin{equation}\label{eq:def Delta}
\Delta f(\e)\eqdef \sum_{j=1}^n \partial_jf(\e).
\end{equation}
It is immediate to check that $\Delta$ is invertible on the space of
all mean zero functions $f:\c\to X$. Below $\Delta^{-1}$ is
understood to be defined for every $f:\c\to X$ by setting
$\Delta^{-1}f=\Delta^{-1}\overline f$. Here  $\overline
f=f-\int_{\c}f(\d)d\mu(\d)$, where $\mu$ denotes the uniform
probability measure on $\{-1,1\}^n$.

The following inequality is due to Pisier~\cite{Pis86}. Throughout
this paper the asymptotic notation $\lesssim,\gtrsim$ indicates the
corresponding inequalities up to universal constant factors. We will
also denote equivalence up to universal constant factors by
$\asymp$, i.e., $A\asymp B$ is the same as $(A\lesssim B)\wedge
(A\gtrsim B)$.

\begin{theorem}[Pisier's inequality]\label{thm:pisier}
For every Banach space $(X,\|\cdot\|)$, every $n\in \N$, every $p\in
[1,\infty]$ and every $f:\{-1,1\}^n\to X$, we have
\begin{multline}\label{eq:classical pisier}
\left(\int_{\c}\Bigg\|f(\e)-\int_{\c}f(\d)d\mu(\d)\Bigg\|^pd\mu(\e)\right)^{1/p}\\
\lesssim \log n \left(\int_\c\int_\c\Bigg\|\sum_{j=1}^n \d_j
{\partial_j} f(\e)\Bigg\|^pd\mu(\e)d\mu(\d)\right)^{1/p}.
\end{multline}
\end{theorem}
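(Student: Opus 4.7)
My plan is to prove Pisier's inequality via the heat semigroup on the Boolean cube. For $s\in[0,1]$, let $T_s$ act on $g:\c\to X$ by $T_sg(\e)=\E[g(\eta^{(s)}\e)]$, where $\eta^{(s)}$ has i.i.d.\ $\pm1$-valued entries with $\E\eta_j^{(s)}=s$. On the Walsh basis $W_A(\e)=\prod_{j\in A}\e_j$, the operator acts diagonally as $T_sW_A=s^{|A|}W_A$, so $T_0g=\int_\c g\,d\mu$ and $T_1g=g$.

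The first step is the integral representation. Differentiating $T_sf$ in $s$ on the Fourier side gives $\tfrac{d}{ds}T_sf=\Delta T_sf/s$, so the fundamental theorem of calculus produces
\[
f-\int_\c f\,d\mu \;=\; \int_0^1 \frac{\Delta T_sf}{s}\,ds.
\]
The second step is a probabilistic formula for the integrand. Since $\partial_jf(y)=y_jh_j(y_{-j})$ is odd in $y_j$, the product $\eta_j^{(s)}(\partial_jf)(\eta^{(s)}\e)=\e_jh_j((\eta^{(s)}\e)_{-j})$ is independent of $\eta_j^{(s)}$; averaging and combining yields
\[
\frac{\Delta T_sf(\e)}{s} \;=\; \E_{\eta^{(s)}}\Bigg[\sum_{j=1}^n \eta_j^{(s)}\,(\partial_jf)(\eta^{(s)}\e)\Bigg].
\]

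The third step replaces the biased $\eta^{(s)}$ by the unbiased Rademachers $\d$ appearing in \eqref{eq:classical pisier}. I would use the coupling $\eta_j^{(s)}=\zeta_j+(1-\zeta_j)\d_j$ where $\zeta_j\sim\mathrm{Bernoulli}(s)$ and $\d_j$ is Rademacher (all independent). Writing $S=\{j:\zeta_j=1\}$, on $S$ one has $\eta_j^{(s)}=1$ while on $S^c$ one has $\eta_j^{(s)}=\d_j$, so the inner sum splits into a ``frozen'' piece over $S$ and a ``Rademacher'' piece over $S^c$. For the Rademacher piece, the change of variables $\tilde\d_k=\d_k\e_k$ on $S^c$ combined with the Kahane contraction principle (after extending the partial Rademacher sum on $S^c$ to a full sum over $\{1,\ldots,n\}$) bounds it pointwise in $s$ by $R:=\|\sum_{j=1}^n\d_j\partial_jf\|_{L^p(\mu\otimes\mu)}$.

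The fourth step is the $L^p$ estimate and integration in $s$. Unlike the Rademacher piece, the frozen piece $\sum_{j\in S}(\partial_jf)(\eta^{(s)}\e)$ cannot be bounded pointwise by $R$: it reduces to $\sum_{j\in S}\partial_jf(\xi)$ for a Rademacher $\xi$, whose norm can be as large as $|S|\sim sn$. Instead, one balances its contribution across the $s$-integral by a dyadic decomposition of $[0,1]$: on each dyadic scale of $s$, a Kahane symmetrization of the centered indicators $\mathbf 1_{j\in S}-s$ against the Rademachers $\d_j$ produces a contribution $\lesssim R$, and summing the $\asymp\log n$ dyadic scales yields the overall bound
\[
\bigg\|f-\int_\c f\,d\mu\bigg\|_{L^p(\mu)} \;\lesssim\; R\log n,
\]
which is \eqref{eq:classical pisier}.

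The main obstacle is the fourth step: handling the frozen piece. Because no Rademacher sign accompanies the indices $j\in S$, the frozen sum is not a Rademacher average and one must instead exploit cancellation in the $s$-integral --- balancing the small-$|S|$ regime $s\lesssim 1/n$ against the large-$|S|$ regime $s\gtrsim 1/n$ --- to recover a Rademacher-dominated quantity, at the cost of the $\log n$ factor coming from the logarithmic measure $ds/s$ of the interval $[1/n,1]$.
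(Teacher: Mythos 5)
Your Steps 1 and 2 are correct, and they are exactly the classical starting point: $f-\int_{\c}f\,d\mu=\int_0^1 s^{-1}\Delta T_sf\,ds$ together with the identity $s^{-1}\Delta T_sf(\e)=\E_{\eta^{(s)}}[\sum_j\eta_j^{(s)}\partial_jf(\eta^{(s)}\e)]$. The genuine gap is in Steps 3--4. The identity of Step 2 lives entirely on the correlation between the sign $\eta_j^{(s)}$ and the argument $\eta^{(s)}\e$; your coupling plus the change of variables $\tilde\d_k=\d_k\e_k$ discards this correlation, and after that the loss is not repairable by bookkeeping in $s$. Concretely, the frozen piece has a large \emph{mean}: conditionally on the (now independent, uniform) argument $\xi$, one has $\E_S\big[\sum_{j\in S}\partial_jf(\xi)\big]=s\,\Delta f(\xi)$; equivalently, if you keep $\E_{\eta^{(s)}}$ inside, the frozen part of the identity contributes exactly $\Delta T_sf$, whose integral over $s\in[0,1]$ is $\sum_{A\neq\emptyset}\frac{|A|}{|A|+1}\hat f(A)W_A$ --- a Fourier multiple of $f-\int f\,d\mu$ carrying essentially all of its mass, i.e.\ a quantity of the same nature and size as the left-hand side you are trying to bound. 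Your proposed fix, symmetrizing the centered indicators $\mathbf 1_{j\in S}-s$ against Rademachers on dyadic scales, controls only the fluctuation of the frozen sum around this mean; the mean itself is not an oscillating function of $s$, so no dyadic decomposition of $[1/n,1]$ produces cancellation for it. Even the fluctuation-free scalar example $f=W_{\{1,\ldots,n\}}$ (so $\partial_jf=W_{\{1,\ldots,n\}}$ for all $j$) shows the frozen piece, estimated inside the $s$-integral as you propose, is of size $\int_{1/n}^1\big\|\sum_{j\in S}\partial_jf\big\|_p\,ds\asymp\int_{1/n}^1 sn\,ds\asymp n$, while $R\log n\asymp\sqrt n\,\log n$.

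The workable route (Pisier's, and in dual form the paper's Section~4 via the auxiliary function $G_t$ of Lemma~\ref{lem:auxiliary}) never decouples the signs from the argument: it proves a single-scale estimate of the form $\|\Delta T_sf\|_{L_p(X)}\le\frac{C}{1-s}\,R$ by extracting the Rademacher component of a $G_s$-type function (a convex combination of functions equidistributed with $f$, which is where the bound $\frac{1-s^n}{1-s}$ comes from), and combines it with the crude bound $\|\Delta T_sf\|_{L_p(X)}\le n\max_j\|\partial_jf\|_{L_p(X)}\le nR$ near $s=1$; the logarithm then arises as $\int_0^{1-1/n}\frac{ds}{1-s}$ (or, with the paper's exact bookkeeping, as $\int_0^1\frac{1-t^n}{1-t}\,dt=\sum_{k=1}^n\frac1k$), not from counting dyadic scales of $|S|$. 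As written, your Step 4 is circular: the dominant part of the frozen piece is precisely the quantity being estimated.
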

Due to the application of Pisier's inequality to the theory of
nonlinear type (see~\cite{Pis86,NS02,GN10,Nao12}), it is of great
interest to understand when~\eqref{eq:classical pisier} holds true
with the $\log n$ term replaced by a constant that may depend on the
geometry of $X$ but is independent of $n$. Talagrand
proved~\cite{Tal93} that the $\log n$ term in~\eqref{eq:classical
pisier} is asymptotically optimal for general Banach spaces $X$,
Wagner proved~\cite{Wag00} that the $\log n$ term
in~\eqref{eq:classical pisier} can be replaced by a universal
constant if $p=\infty$ and $X$ is a general Banach space, and
in~\cite{NS02} it is shown that the $\log n$ term
in~\eqref{eq:classical pisier} can be replaced by a constant that is
independent of $n$ if $X$ is a $\mathrm{UMD}$ Banach space.
It remains an intriguing open question whether every Banach space of
nontrivial type satisfies~\eqref{eq:classical pisier} with the $\log
n$ term replaced by a constant that is independent of $n$. If true,
this would resolve a 1976 question of Enflo~\cite{Enf76} by
establishing that Rademacher type $p$ and Enflo type $p$ coincide
(see~\cite{NS02,Nao12} and Section~\ref{sec:enflo} below).

Here we obtain a new class of Banach spaces that satisfies a
dimension-independent Pisier inequality. Our starting point is the
following extension of Pisier's inequality.
\begin{definition}[Pisier constant of $X$]\label{def:pisier const}
The $n$-dimensional Pisier constant of $X$ (with exponent $p$),
denoted $\mathfrak{P}_p^n(X)$, is the infimum over those
$\mathfrak{P}\in (0,\infty)$ such that every $f_1,\ldots,f_n:\c\to
X$ satisfy

\begin{multline}\label{eq:def extended pisier}
\left(\int_\c\Bigg\|\sum_{j=1}^n
\partial_jf_j(\e)\Bigg\|^pd\mu(\e)\right)^{1/p}\\\le
\mathfrak{P}\left(\int_\c\int_\c\Bigg\|\sum_{j=1}^n \d_j\Delta
f_j(\e)\Bigg\|^pd\mu(\e)d\mu(\d)\right)^{1/p}.
\end{multline}
We also set
$$
\mathfrak{P}_p(X)\eqdef \sup_{n\in \N} \mathfrak{P}_p^n(X).
$$
\end{definition}
Inequality~\eqref{eq:def extended pisier} reduces to Pisier's
inequality if we choose $f_j=\Delta^{-1}\partial_j f$ for some
$f:\c\to X$. The generalized inequality~\eqref{eq:def extended
pisier} has the advantage of being well-behaved under duality, as
explained in Section~\ref{sec:dual}. The following theorem yields a
logarithmic bound on $\mathfrak{P}_n^p(X)$, thus extending Pisier's
inequality.

\begin{theorem}\label{thm:generalized pisier}
For every Banach space $X$, every $p\in [1,\infty]$ and every $n\in
\N$,
$$
\mathfrak{P}_p^n(X)\le \sum_{k=1}^{n}\frac{1}{k}.
$$
\end{theorem}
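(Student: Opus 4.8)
The plan is to exploit the semigroup/martingale structure of the hypercube by decomposing the partial derivatives $\partial_j f_j$ along the Fourier–Walsh expansion and resumming cleverly. Recall that every $g:\c\to X$ has the expansion $g=\sum_{S\subset\n}\hat g(S)W_S$, where $W_S(\e)=\prod_{j\in S}\e_j$ and $\hat g(S)\in X$, and that $\partial_j W_S=W_S$ if $j\in S$ and $\partial_j W_S=0$ otherwise, while $\Delta W_S=|S|W_S$. The first step is to write
\begin{equation*}
\sum_{j=1}^n\partial_jf_j=\sum_{k=1}^n\ \sum_{\substack{S\subset\n\\ |S|=k}}\ \sum_{j\in S}\hat{f_j}(S)W_S
=\sum_{k=1}^n\frac1k\cdot\sum_{\substack{S\subset\n\\ |S|=k}}\Bigl(\sum_{j\in S}\hat{f_j}(S)\Bigr)kW_S,
\end{equation*}
so that the inner object $\sum_{|S|=k}\bigl(\sum_{j\in S}\hat{f_j}(S)\bigr)kW_S$ is precisely the level-$k$ part of $\sum_{j=1}^n\Delta(\partial_j f_j)$-type data; more to the point, each level $k$ equals the level-$k$ part of $\sum_j \Delta f_j$ after applying $\partial_j$ inside. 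The key identity I would isolate is that, writing $g_k$ for the projection of $\sum_{j}\partial_j\Delta f_j$ onto Walsh functions of degree $k$ (equivalently $g_k=\sum_{|S|=k}(\sum_{j\in S}\hat{f_j}(S))W_S$ after the $\Delta$ is accounted for), we have $\sum_{j=1}^n\partial_j f_j=\sum_{k=1}^n \tfrac1k g_k$, while $\sum_{j=1}^n\d_j\Delta f_j(\e)$ is a "Rademacher lift" whose level-$k$ contribution in $\e$ is governed by the same $g_k$.

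The second step is the probabilistic heart: compare $\sum_{k=1}^n\frac1k g_k(\e)$ with the two-parameter average $\int_\c\|\sum_j\d_j\Delta f_j(\e)\|^p d\mu(\d)$. Here I would use a contraction/averaging principle for Rademacher sums in an arbitrary Banach space. Specifically, introduce an auxiliary i.i.d. sequence and observe that for any choice of signs the map that sends $(\e,\d)$ to $\sum_j\d_j\Delta f_j(\e)$ can be post-composed with the hypercube heat semigroup $T_\rho$ (multiplication by $\rho^{|S|}$ on level $S$); by Jensen and the fact that $T_\rho$ is an average of sign-flip operators, $\|T_\rho h\|_{L^p}\le\|h\|_{L^p}$. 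The telescoping identity $\sum_{k=1}^n\frac1k g_k=\int_0^1\bigl(\sum_{k=1}^n g_k \rho^{k-1}\bigr)d\rho$ combined with $\sum_{k=1}^n g_k\rho^{k-1}=\rho^{-1}\,(\text{level-truncated } T_\rho\text{ applied to } \sum_j\partial_j\Delta f_j)$ is not quite integrable at $\rho=0$, so instead I would split the harmonic sum as $\sum_{k=1}^n\frac1k=\sum_{k=1}^n\int_0^1(1-\rho)^{k-1}d\rho$ — wait, that also gives $1/k$ — and use $g_k=$ the degree-$k$ part of the vector-valued function $\sum_j \d_j\Delta f_j(\e)$ integrated against the appropriate Walsh character in $\d$. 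The cleanest route: for fixed $\e$, $g_k(\e)$ is the coefficient extracted by integrating $\sum_j\d_j\Delta f_j(\e)$ against $\sum_{|S|=k,\,j\in S}$-characters in $\d$; bounding the $L^p(d\mu(\d))$ norm of the relevant conditional expectation by $1$ and then summing $\sum_k 1/k$ by the triangle inequality in $L^p$ yields the claim.

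The main obstacle I anticipate is step two: getting the constant to be \emph{exactly} $\sum_{k=1}^n 1/k$ rather than something like $C\log n$. This forces one to avoid any lossy step — in particular one cannot afford to pass through Rademacher projections with constants, and one must use a single clean contraction (the semigroup $T_\rho$ is a genuine $L^p(X)$-contraction for every Banach space $X$ because it is an average of measure-preserving sign changes) together with an exact representation of $1/k$ as an integral. I would therefore set up the identity
\begin{equation*}
\sum_{j=1}^n\partial_j f_j(\e)=\int_\c\Biggl(\sum_{k=1}^n \frac1k\,\d_{i_1(k)}\cdots\Biggr)\quad\text{— more precisely —}\quad \sum_{j=1}^n\partial_jf_j=\sum_{k=1}^n\frac1k\,P_k\Bigl(\textstyle\sum_j\Delta f_j\Bigr)\circ(\text{restrict }j\in S),
\end{equation*}
where $P_k$ is the degree-$k$ Walsh projection, then write $P_k(\sum_j\Delta f_j(\e))$ evaluated appropriately as $\int_\c W_{?}(\d)\,\sum_j\d_j\Delta f_j(\e)\,d\mu(\d)$-type conditional expectation of $L^p(X)$-norm at most the right-hand side of \eqref{eq:def extended pisier}, and finally apply the triangle inequality over $k=1,\dots,n$ with weights $1/k$. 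Checking that the degree-$k$ projection of $\sum_j\partial_j f_j$ really does coincide, up to the factor $1/k$, with the degree-$k$ projection of the two-parameter function $\sum_j\d_j\Delta f_j$ under the correct averaging in $\d$ is the one computation I would do carefully.
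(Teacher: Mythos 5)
Your proposal contains one of the two key ideas of the paper's argument --- the representation $\frac1k=\int_0^1 t^{k-1}\,dt$ as the source of the harmonic sum $\sum_{k=1}^n\frac1k$ --- but the mechanism you propose for bounding each piece has a genuine gap, and it is exactly at the point you yourself flag as ``the one computation I would do carefully.'' Your ``cleanest route'' asks to extract the degree-$k$ part $\sum_{|S|=k}\sum_{j\in S}\hat{f_j}(S)W_S$ of $\sum_j\partial_jf_j$ from $F(\e,\d)=\sum_j\d_j\Delta f_j(\e)$ by ``integrating against the appropriate characters'' and then ``bounding the $L^p$ norm of the relevant conditional expectation by $1$.'' But the operator you need there is not a conditional expectation: it is a degree-$k$ Walsh projection in $\e$ composed with the restriction to coefficients with $j\in S$, and such Fourier projections are \emph{not} contractions on $L_p(X)$ for a general Banach space $X$. (Already the degree-one projection $\Rad$ has norm growing with $n$ on $L_p(\ell_1)$; this lack of $K$-convexity is precisely why Pisier's inequality carries a $\log n$ and why the theorem is nontrivial.) The heat semigroup $T_\rho$, which \emph{is} a contraction, does not rescue this: applied in the $\d$ variable it merely multiplies $F$ by $\rho$, and applied in the $\e$ variable it produces $\sum_j\d_jf_j(\e)$ rather than $\sum_j\partial_jf_j(\e)$ after integrating $d\rho/\rho$ --- it cannot by itself implement the restriction to $j\in S$, i.e.\ the coupling between the $\d_j$ and the sets $S$ containing $j$.

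The missing device is the paper's auxiliary multiplier $\prod_{i\in A}\bigl(t+(1-t)\d_i\bigr)$. Working on the dual side with $g\in L_q(X^*)$, the paper sets $G_t(\d)=\frac{1}{1-t}\sum_A\hat g(A)W_A\prod_{i\in A}(t+(1-t)\d_i)-\frac{t^n}{1-t}g$ and proves two facts: first, $\Rad(G_t)(\d)=\sum_{A\ne\emptyset}t^{|A|-1}\sum_{j\in A}\d_j\hat g(A)W_A$, so that $\int_0^1\Rad(G_t)\,dt=\sum_j\d_j\Delta^{-1}\partial_jg$ by your integral identity; second, $(1-t)G_t(\d)(\e)=\sum_{B\subsetneq\n}t^{|B|}(1-t)^{n-|B|}g_B(\e,\d)$ where $g_B(\e,\d)=g(\sum_{j\in B}\e_je_j+\sum_{j\notin B}\e_j\d_je_j)$ is equidistributed with $g$, whence $\|G_t\|\le\frac{1-t^n}{1-t}\|g\|$ and $\int_0^1\frac{1-t^n}{1-t}\,dt=\sum_{k=1}^n\frac1k$. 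It is this expression of the multiplier as a convex combination of measure-preserving coordinate substitutions --- not a Fourier projection and not the heat semigroup --- that supplies the contraction valid in every Banach space. Note also that the paper proves the \emph{dual} statement, where one is allowed to subtract an arbitrary element of $\Rad^\perp$ and only the coset norm must be controlled; your primal formulation would require the adjoint of this construction, and without the $G_t$ device (or an equivalent one) your argument does not close.
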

Our approach yields a quantitative improvement over Pisier's
inequality only in lower order terms: an optimization of Pisier's
argument (as carried out in~\cite{Nao12}) shows that the $O(\log n)$
term in~\eqref{eq:classical pisier} can be taken to be at most $\log
n+ O(\log\log n)$, while Theorem~\ref{thm:generalized pisier} shows
that this term can be taken to be $\log n+ O(1)$.

In~\cite{NS02} it was shown that the logarithmic term
in~\eqref{eq:classical pisier} can be replaced by a constant that is
independent of $n$ if $X$ is a $\mathrm{UMD}$ Banach space. Recall
that $X$ is a $\mathrm{UMD}$ Banach space if for every $p\in
(1,\infty)$ there exists a constant $\beta\in (0,\infty)$ such that
if $\{M_j\}_{j=0}^n$ is a $p$-integrable $X$-valued martingale
defined on some probability space $(\Omega,\P)$, then for every
$\e_1,\ldots,\e_n\in \{-1,1\}$ we have
\begin{equation}\label{eq:def UMD}
\int_\Omega \Bigg\|M_0+\sum_{j=1}^n \e_j(M_j-M_{j-1})  \Bigg\|^pd\P\le \beta^p \int_\Omega \|M_n\|^pd\P.
\end{equation}
The infimum over those $\beta\in (0,\infty)$ for which~\eqref{eq:def
UMD} holds true is denoted $\beta_p(X)$. It can be shown
(see~\cite{Bur86}) that $\beta_p(X)\lesssim
\frac{p^2}{p-1}\beta_2(X)$, so in order to define the $\mathrm{UMD}$
property it suffices to require the validity of~\eqref{eq:def UMD}
for $p=2$. $\mathrm{UMD}$ Banach spaces are known to be
superreflexive~\cite{Mau75,Ald79}, and one also has
$\beta_{q}(X^*)=\beta_p(X)$, where $q=p/(p-1)$ (see e.g.
\cite{Bur86}).

In~\cite{Gar90} Garling investigated the natural weakening
of~\eqref{eq:def UMD} in which the desired inequality is required to
hold true in expectation over $\e_1,\ldots,\e_n\in \{-1,1\}$ rather
than for every $\e_1,\ldots,\e_n\in \{-1,1\}$. Specifically, say
that $X$ is a $\mathrm{UMD}^+$ Banach space if for every $p\in
(1,\infty)$ there exists a constant $\beta\in (0,\infty)$ such that
if $\{M_j\}_{j=0}^n$ is a $p$-integrable $X$-valued martingale
defined on some probability space $(\Omega,\P)$ then
\begin{equation}\label{eq:def UMD+}
\int_\c\int_\Omega \Bigg\| M_0+\sum_{j=1}^n \e_j(M_j-M_{j-1})
\Bigg\|^pd\P d\mu(\e)\le \beta^p \int_\Omega \|M_n\|^pd\P.
\end{equation}
The infimum over those $\beta$ for which~\eqref{eq:def UMD+} holds
true is denoted $\beta_p^+(X)$.

\begin{theorem}\label{thm:UMD+}
If $X$ is a Banach space such that $X^*$ is $\mathrm{UMD}^+$ then
the following inequality holds true. Fix $p\in (1,\infty)$ and $n\in
\N$. For every function $F:\c\times\c\to X$ and $j\in \n$ denote
$$F_j(\e)\eqdef\int_\c \d_jF(\e,\d)d\mu(\d).$$ Then
\begin{multline}\label{eq:umd+ F}
\left(\int_\c\Bigg\|\sum_{j=1}^n
\Delta^{-1}\partial_jF_j(\e)\Bigg\|^pd\mu(\e)\right)^{1/p}\\\le
\beta_q^+(X^*)\left(\int_\c\int_\c\|F(\e,\d)\|^pd\mu(\e)d\mu(\d)\right)^{1/p},
\end{multline}
where $q=p/(p-1)$.
\end{theorem}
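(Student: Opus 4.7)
My plan is to prove \eqref{eq:umd+ F} by duality, reducing it to the estimate
$$\Big\|\sum_{j=1}^n \d_j\partial_j\Delta^{-1} g\Big\|_{L^q(\mu\times\mu;X^*)} \le \beta_q^+(X^*)\,\|g\|_{L^q(\mu;X^*)}$$
for all mean-zero $g:\c\to X^*$, where $q=p/(p-1)$. To set this up, test the left-hand side of \eqref{eq:umd+ F} against such a $g$ with $\|g\|_{L^q(X^*)}\le 1$. The hypercube operators $\partial_j$ and $\Delta^{-1}$ are diagonal in the Walsh basis $w_A(\e)=\prod_{j\in A}\e_j$ with real eigenvalues, hence self-adjoint with respect to $\mu$; combining this with the definition $F_j(\e)=\int_\c \d_j F(\e,\d)\,d\mu(\d)$ and Fubini gives
$$\int_\c\Big\langle \sum_{j=1}^n\Delta^{-1}\partial_j F_j(\e),\,g(\e)\Big\rangle d\mu(\e) = \int_\c\!\int_\c\Big\langle F(\e,\d),\,\sum_{j=1}^n\d_j\partial_j\Delta^{-1}g(\e)\Big\rangle d\mu(\e) d\mu(\d),$$
and H\"older's inequality then reduces the theorem to the displayed bound.

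To establish this bound, expand $g=\sum_{A\neq\emptyset}\hat g(A) w_A$ with $\hat g(A)\in X^*$, so that
$$\sum_{j=1}^n \d_j\partial_j\Delta^{-1}g(\e) = \sum_{A\neq\emptyset}\frac{1}{|A|}\Big(\sum_{j\in A}\d_j\Big)\hat g(A) w_A(\e).$$
The key identity is that for a uniformly random permutation $\pi$ of $\n$,
$$\E_\pi[\d_{\max_\pi A}] = \frac{1}{|A|}\sum_{j\in A}\d_j,$$
where $\max_\pi A$ denotes the element of $A$ occurring last in the $\pi$-ordering; this holds by symmetry, since $\max_\pi A$ is uniform on $A$. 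Pulling $\E_\pi$ out of the norm via the triangle inequality in Bochner form,
$$\Big\|\sum_{j=1}^n\d_j\partial_j\Delta^{-1}g\Big\|_{L^q(\mu\times\mu;X^*)} \le \E_\pi\Big\|\sum_{A\neq\emptyset}\d_{\max_\pi A}\hat g(A) w_A\Big\|_{L^q(\mu\times\mu;X^*)}.$$

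For each fixed $\pi$, consider the Doob martingale $M_k^\pi := \E[g\mid \e_{\pi(1)},\ldots,\e_{\pi(k)}]$ on $(\c,\mu)$, whose differences are $d_k^\pi = \sum_{A:\,\max_\pi A=\pi(k)}\hat g(A) w_A$ and which satisfies $M_0^\pi=0$ and $M_n^\pi=g$. Then
$$\sum_{A\neq\emptyset}\d_{\max_\pi A}\hat g(A) w_A = \sum_{k=1}^n \d_{\pi(k)} d_k^\pi,$$
and since $(\d_{\pi(k)})_{k=1}^n$ has the same joint distribution as $(\d_k)_{k=1}^n$, the UMD$^+$ property of $X^*$ applied to $\{M_k^\pi\}$ gives
$$\int_\c\!\int_\c\Big\|\sum_{k=1}^n \d_{\pi(k)} d_k^\pi(\e)\Big\|^q d\mu(\e) d\mu(\d) \le \beta_q^+(X^*)^q\, \|g\|_{L^q(X^*)}^q,$$
uniformly in $\pi$. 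Averaging over $\pi$ closes the argument.

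I expect the conceptual core of the proof to be the permutation-averaging identity, which reinterprets the symmetric weight $\frac{1}{|A|}\sum_{j\in A}\d_j$ as an average of the ``extremal'' weights $\d_{\max_\pi A}$ that fit into the UMD$^+$ martingale-transform framework; the duality setup via the self-adjointness of $\Delta^{-1}\partial_j$ is standard, and the final appeal to UMD$^+$ is immediate once the Doob martingale associated with the random ordering is identified.
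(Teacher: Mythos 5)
Your proposal is correct and follows essentially the same route as the paper: the paper proves exactly your dual estimate (its Theorem~\ref{thm:dual UMD+}) by running the Doob martingale of $g$ along a uniformly random ordering of the coordinates and using the identity $\frac{1}{n!}\bigl|\{\s:\max\s(A)=\s(j)\}\bigr|=\frac{1}{|A|}$ for $j\in A$, which is precisely your $\E_\pi[\d_{\max_\pi A}]=\frac{1}{|A|}\sum_{j\in A}\d_j$. One small caution: restricting the dual bound to mean-zero $g$ can cost a factor of $2$ in the duality step (since $g\mapsto g-\E g$ need not be a contraction on $L_q(X^*)$), but this is harmless because your martingale argument applies verbatim to general $g$ with $M_0^\pi=\hat g(\emptyset)$, the term $M_0^\pi$ being removable from the $\mathrm{UMD}^+$ inequality by the symmetrization $\d\mapsto-\d$.
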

For every $f_1,\ldots,f_n:\c\to X$, an application of
Theorem~\ref{thm:UMD+} to the function $F(\e,\d)=\sum_{j=1}^n \d_j
f_j(\e)$ yields the following estimate on the Pisier constant of a
$\mathrm{UMD}^+$ Banach space.
\begin{corollary}\label{cor:UMD+}
$ \mathfrak{P}_{p}(X)\le \beta_{q}^+(X^*). $
\end{corollary}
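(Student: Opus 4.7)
The plan is to apply Theorem~\ref{thm:UMD+} with a carefully chosen function $F$ so that its conclusion~\eqref{eq:umd+ F} collapses to the defining inequality~\eqref{eq:def extended pisier} of $\mathfrak P_p^n(X)$. Given arbitrary $f_1,\ldots,f_n:\c\to X$, I would set
$$F(\e,\d)\eqdef\sum_{j=1}^n \d_j\Delta f_j(\e).$$
The motivation is to match right-hand sides: the right-hand side of~\eqref{eq:umd+ F} is the $L^p$ norm of $F$ itself, whereas the right-hand side of~\eqref{eq:def extended pisier} is the $L^p$ norm of $\sum_j \d_j\Delta f_j$, so one should simply declare them equal. (This is, up to the trivial relabeling $f_j\leftrightarrow\Delta f_j$, the choice flagged by the authors in the sentence preceding the corollary.)

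Next I would compute $F_j$. Since the Walsh characters $\{\d_k\}_{k=1}^n$ are $L^2(\mu)$-orthonormal, i.e. $\int_\c \d_j\d_k\,d\mu(\d)=\delta_{jk}$, we obtain
$$F_j(\e)=\int_\c \d_jF(\e,\d)\,d\mu(\d)=\Delta f_j(\e).$$
Then I would use the fact that $\Delta^{-1}$, $\partial_j$, and $\Delta$ are all diagonal in the Walsh basis (they act on a Walsh coefficient $\widehat f(S)$ by multiplication with $1/|S|$, $\1_{j\in S}$, and $|S|$, respectively) and hence commute pairwise on the mean zero subspace of $L^p(\c;X)$. Since $\partial_j f_j$ is automatically mean zero, $\Delta^{-1}$ acts on it as a genuine inverse, so
$$\Delta^{-1}\partial_jF_j=\Delta^{-1}\partial_j\Delta f_j=\partial_j f_j.$$

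Substituting these two identities into~\eqref{eq:umd+ F} yields exactly~\eqref{eq:def extended pisier} with constant $\mathfrak P=\beta_q^+(X^*)$, which gives $\mathfrak P_p^n(X)\le\beta_q^+(X^*)$ for every $n\in\N$, and therefore $\mathfrak P_p(X)\le\beta_q^+(X^*)$. I do not anticipate any real obstacle here: the substantive content is entirely contained in Theorem~\ref{thm:UMD+}, and the corollary is a short algebraic manipulation once the correct $F$ has been identified. The only care required is the commutativity of the hypercube Fourier multipliers, which is immediate from the Walsh expansion.
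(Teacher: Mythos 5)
Your proof is correct and follows essentially the same route as the paper: the authors apply Theorem~\ref{thm:UMD+} to $F(\e,\d)=\sum_{j=1}^n\d_j f_j(\e)$, which is your choice up to the trivial relabeling $f_j\leftrightarrow\Delta f_j$ that you already note, and the Walsh-multiplier identities $F_j=\Delta f_j$ and $\Delta^{-1}\partial_j\Delta f_j=\partial_j f_j$ are exactly the (correct) bookkeeping needed to pass from~\eqref{eq:umd+ F} to~\eqref{eq:def extended pisier}.
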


It is unknown if a $\mathrm{UMD}^+$ Banach space must also be a
$\mathrm{UMD}$ Banach space, though it seems reasonable to
conjecture that there are $\mathrm{UMD}^+$ spaces that are not
$\mathrm{UMD}$. Regardless of this, Theorem~\ref{thm:UMD+} and
Corollary~\ref{cor:UMD+} are conceptually different from the result
of~\cite{NS02}, which relies on the full force of the $\mathrm{UMD}$
condition, i.e. it requires the validity of~\eqref{eq:def UMD} for
every choice of signs $\e_1,\ldots,\e_n$, while our argument needs
such estimates to hold true only for an average choice of signs. We
also have a quantitative improvement: in~\cite{NS02} it was shown
that Pisier's inequality holds true with the $O(\log n)$ term
in~\eqref{eq:classical pisier} replaced by
$\beta_p(X)=\beta_q(X^*)$, while we obtain the same estimate with
the $O(\log n)$ term in~\eqref{eq:classical pisier} replaced by
$\beta_q^+(X^*)\le \beta_q(X^*)$. Geiss proved~\cite{Gei99} that for
every $\eta\in (0,1)$ there is $C_\eta\in (0,\infty)$ such that for
every $M>1$ there is  a Banach space $X$ that satisfies
$$
\infty>\beta_q(X^*)\ge C_\eta\beta_q^+(X^*)^{2-\eta}\ge M.
$$

\begin{remark}{\em Inequality~\eqref{eq:umd+ F} is an extension of the generalized
Pisier inequality~\eqref{eq:def extended pisier}, but for general
Banach spaces it behaves very differently: unlike the logarithmic
behavior of Theorem~\ref{thm:generalized pisier}, the best constant
appearing in the right hand side of~\eqref{eq:umd+ F} for a general
Banach space $X$ must be at least a constant multiple of $\sqrt{n}$,
as exhibited by the case $X=L_1((\c,\mu),\R)$ and $F:\c\times \c\to
X$ given by $F(\e,\d)(\eta)=\prod_{i=1}^n (1+\e_i\d_i\eta_i)$.}
\end{remark}

Suppose that $\theta\in (0,1)$ and $X=[H,Y]_\theta$, where $H$ is a
Hilbert space and $Y$ is an arbitrary Banach space. Here
$[\cdot,\cdot]_\theta$ denotes complex interpolation
(see~\cite{BL76}). Theorem~\ref{thm:interpolation} below shows that
in this case $\mathfrak{P}_p(X)<\infty$, and therefore Pisier's
inequality holds true with the $\log n$ term in~\eqref{eq:classical
pisier} replaced by a constant that is independent of $n$. Pisier
proved~\cite{Pis79} that every Banach lattice of nontrivial type
(see~\cite{LT79}) is of the form $[H,Y]_\theta$ for some $\theta\in
(0,1)$, so we thus obtain the desired dimension independence in
Pisier's inequality for Banach lattices of nontrivial type. This
result does not follow from previously known cases in which a
dimension-independent Pisier inequality has been proved, since, as
shown by Bourgain~\cite{Bou83,Bou84}, there exist Banach lattices of
nontrivial type which are not $\mathrm{UMD}$. Note, however, that we
are still far from proving the conjectured dimension-independent
Pisier inequality for Banach spaces with nontrivial type: any space
of the form $[H,Y]_\theta$ admits an equivalent norm whose modulus
of smoothness has power type $2/(1+\theta)$ (see~\cite{Pis79,CR82}),
while there exist Banach spaces with nontrivial type that do not
admit such an equivalent norm (see~\cite{Jam74,JL75,Jam78,PX87}).

\begin{theorem}\label{thm:interpolation}
Let $X,Y$ be Banach spaces and let $H$ be a Hilbert space. Suppose
that for some $\theta\in (0,1)$ we have $X=[H,Y]_\theta$. Then for
every $p\in (1,\infty)$,
\begin{equation*}\label{eq:interpolation bound}
\mathfrak{P}_p(X)\leq \frac{2\max\{p,p/(p-1)\}}{1-\theta}.
\end{equation*}
\end{theorem}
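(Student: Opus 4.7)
The plan is to establish the theorem by complex interpolation, using the Hilbert endpoint $H$ as the ``good'' side.

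\emph{Hilbert endpoint.} First, I would verify that for any Hilbert space $H$ and any $p\in(1,\infty)$, one has the dimension-free bound
\[
 \mathfrak{P}_p^n(H) \leq \max\{p,p/(p-1)\}
\]
uniformly in $n$. One clean route is to apply Corollary~\ref{cor:UMD+} with $X=H$: since $H^{*}=H$ is $\mathrm{UMD}$ (and hence $\mathrm{UMD}^+$) and Burkholder's classical bound gives $\beta_{q}(H)\lesssim\max\{p,p/(p-1)\}$, Corollary~\ref{cor:UMD+} yields the claim. Alternatively, for $p=2$ a direct Walsh expansion $f_{j}=\sum_{S\subseteq\n}\hat f_{j}(S)W_{S}$ combined with orthogonality and Cauchy--Schwarz gives $\mathfrak{P}_{2}^{n}(H)\le 1$, and the general $p$ case is obtained by passing between $L^{2}$ and $L^{p}$ with Kahane's inequality for Hilbert-valued Rademacher sums.

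\emph{Analytic family.} Next, I would introduce an analytic family of operators $\{T_{z}\}$ on the closed strip $\{z\in\C:0\le\Re z\le 1\}$, valued in the bounded operators $L^{p}(\c\times\c;X)\to L^{p}(\c;X)$, built by inserting a complex power of the hypercube Laplacian $\Delta$ (equivalently, of the Bonami--Beckner semigroup $e^{-t\Delta}$) into the defining formula of the Pisier operator $G\mapsto\sum_{j=1}^{n}\partial_{j}\Delta^{-1}\int_{\c}G(\cdot,\d)\d_{j}\,d\mu(\d)$. The family would be arranged so that at $z=\theta$ the operator $T_{\theta}$ coincides with the map whose norm is $\mathfrak{P}_{p}^{n}(X)$; so that on the line $\Re z=0$ the operator $T_{iy}$ is uniformly bounded in $y$ and $n$ as a map $L^{p}(\c\times\c;H)\to L^{p}(\c;H)$ by the Hilbert endpoint bound above; and so that on the line $\Re z=1$ the family degenerates to a trivial, diagonal-type operator $T_{1+iy}$ that is uniformly bounded on $L^{p}(\c\times\c;Y)\to L^{p}(\c;Y)$ independently of $n$ and of $Y$.

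\emph{Interpolation.} Using the complex interpolation identity $L^{p}(\c;X)=[L^{p}(\c;H),L^{p}(\c;Y)]_{\theta}$ (and similarly on $\c\times\c$) together with Stein's interpolation theorem applied to $\{T_{z}\}$, I would conclude $\|T_{\theta}\|\le A^{1-\theta}B^{\theta}$, where $A$ is the Hilbert bound from the first step and $B$ is the trivial bound at $\Re z=1$. The factor $(1-\theta)^{-1}$ in the claimed estimate is the standard growth price paid for extracting the value of an analytic family at the interior point $\theta$ from uniform boundary estimates via Hadamard's three-lines lemma, and it also absorbs the numerical $2$ in the numerator.

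The main obstacle is the construction in the second step: one must simultaneously arrange that $T_{\theta}$ matches the target Pisier operator and that $T_{1+iy}$ really collapses to a uniformly bounded operator on an arbitrary Banach space $Y$. A naive direct interpolation between the Hilbert bound and the logarithmic bound of Theorem~\ref{thm:generalized pisier} on the $Y$ side would yield only a constant still depending on $n$; the role of the analytic family is precisely to bypass this by producing, on the $Y$ boundary line, an operator so simple that its bound is dimension-free, at the price of the $(1-\theta)^{-1}$ blow-up.
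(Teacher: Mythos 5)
Your outline is not a proof: the entire weight rests on the second step, and you acknowledge yourself that the analytic family $\{T_z\}$ with the required boundary behavior is never constructed. That construction is precisely the missing idea, and the properties you demand of it are not believable as stated. You want $T_{1+iy}$ to ``degenerate to a trivial, diagonal-type operator'' bounded on $L_p(\{-1,1\}^n\times\{-1,1\}^n;Y)\to L_p(\{-1,1\}^n;Y)$ uniformly in $n$ and in the \emph{arbitrary} Banach space $Y$, while $T_\theta$ is the Pisier operator built from $\Delta^{-1}\partial_j$. The natural candidates obtained by inserting complex powers of $\Delta$ do no such thing: dropping the $\Delta^{-1}$ altogether gives, in dual form, $g\mapsto\sum_{j=1}^n\d_j\partial_j g$, which already has norm $\asymp\sqrt{n}$ on scalar-valued functions (take $g=W_{\{1,\ldots,n\}}$, so the output is $(\sum_j\d_j)W_{\{1,\ldots,n\}}$), and this survives passage to the quotient modulo $\Rad^\perp$; keeping a power $\Delta^{-\beta(z)}$ with $\beta(1)=1$ puts you back at the $\log n$ of Theorem~\ref{thm:generalized pisier}, while any other choice forces you to control imaginary powers $\Delta^{-iy}$ on $L_p(Y)$ for arbitrary $Y$, uniformly in $n$ -- a genuinely nontrivial multiplier problem that your sketch does not touch. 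Your account of where $2/(1-\theta)$ comes from is also wrong in principle: the three-lines/Stein argument with admissible boundary growth yields a bound of the form $A^{1-\theta}B^{\theta}$ (times a Poisson average of the growth factors), not a $(1-\theta)^{-1}$ blow-up, so the claimed constant is asserted rather than derived. Finally, note that the theorem concerns the restriction of the operator $S$ to $\Rad_{L_p(X)}$ (dually, estimates modulo $\Rad^\perp$); your family would have to be set up on these sub/quotient spaces, another point left unaddressed.

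For comparison, the paper's proof uses no analytic family and no Stein interpolation. It writes $1/|A|=\int_0^1 t^{|A|-1}\,dt$, producing the real one-parameter family $V_t$ of~\eqref{eq:def V_t} whose Rademacher part integrates to $\sum_j\d_j\Delta^{-1}\partial_j g$. For each \emph{fixed} $t$ there are two bounds: the trivial bound $\|V_t\|\le 2/(1-t)$ on $L_r(W)$ for every Banach space $W$ and every $r$, and the improved bound $\|V_t\|\le (1-t)^{-1/2}$ on $L_2(H)$ (Lemma~\ref{lem:Hilbert case}). Ordinary complex interpolation of the \emph{spaces} -- first in the exponent, then between $H$ and $W$ using $Z=[H,W]_\theta$ -- gives $\|V_t\|_{L_q(Z)\to L_q(L_q(Z))}\le 2(1-t)^{-1+(1-\theta)\min\{1/q,1-1/q\}}$, and the constant $2\max\{q,q/(q-1)\}/(1-\theta)$ is exactly the integral over $t\in(0,1)$ of this bound: the $(1-\theta)^{-1}$ arises because the exponent of $(1-t)$ has been pushed strictly below $1$, which is also precisely why no $\log n$ appears. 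If you want to salvage your plan, the realistic completion is of this kind: the Hilbertian gain must be an improved power of $(1-t)$ for each member of a one-parameter family (equivalently, a gain in the frequency $|A|$), not a Stein family whose $\Re z=1$ trace is trivially bounded on arbitrary $Y$.
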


\begin{remark}\label{rem:dependence on p}
{\em If $r\in (2,\infty)$ then the $O(\log n)$ term in Pisier's
inequality~\eqref{eq:classical pisier}, when $p=2$ and $X=\ell_r$,
can be replaced by $O(r)$, due to  the fact that
$\beta_2^+(\ell_r)\asymp r$ (this follows from Hitczenko's
work~\cite{Hit93}, as explained to us by Mark Veraar). This bound
also follows from Theorem~\ref{thm:interpolation}. At the same time,
an inspection of Talagrand's example in~\cite{Tal93} shows that this
term must be at least a constant multiple of $\log r$. Determining
the correct order of magnitude as $r\to \infty$ of the constant in
Pisier's inequality when $X=\ell_r$ remains an interesting open
problem.}
\end{remark}

\section{Duality}\label{sec:dual}
The dimension $n\in \N$ will be fixed from now on. For $p\in
[1,\infty]$ and a Banach space $X$, let $L_p(X)$ denote the
vector-valued Lebesgue space $L_p((\c,\mu),X)$. Thus $L_p(L_p(X))$
can be naturally identified with the space $L_p((\c\times
\c,\mu\times \mu),X)$.

For $f\in L_p(X)$ we denote its Fourier expansion by
$$
f=\sum_{A\subset\n} \hat{f}(A)W_A,
$$
where the Walsh function $W_A:\c\to \{-1,1\}$ corresponding to
$A\subseteq \n$ is given by $W_A(\e_1,\ldots,\e_n)=\prod_{i\in A}
\e_i$, and the Fourier coefficient $\hat{f}(A)\in X$ is given by
$\hat{f}(A)=\int_\c f(x)W_A(x)d\mu(x)$. Using this (standard)
notation, we have
$$
\forall\, i\in \n,\ \forall\, f\in L_p(X),\quad \partial_if=\sum_{\substack{A\subset\n\\i\in A}}\hat{f}(A)W_A,
$$
$$
\forall\, f\in L_p(X),\quad \Delta f=\sum_{A\subseteq\n} |A|\hat{f}(A)W_A,
$$
and
$$
\forall\, f\in L_p(X),\quad \Delta^{-1}f\eqdef \sum_{\substack{A\subset\n\\A\neq \emptyset}} \frac{1}{|A|}\hat{f}(A)W_A.
$$

The Rademacher projection of $f\in L_p(X)$ is defined as usual by
$$\Rad(f)\eqdef \sum_{i=1}^n \hat{f}(\{i\})W_{\{i\}}.$$ We denote below
$\Rad_X\eqdef \Rad(L_p(X))$ and
$\Rad_X^\perp\eqdef(I-\Rad)(L_p(X))$. The dual of
$(\Rad_X,\|\cdot\|_{L_p(X)})$ is naturally identified with the
quotient $L_q(X^*)/\Rad_{X^*}^\perp$, where $q=p/(p-1)$.


Define an operator $S:L_p(L_p(X))\to L_p(X)$ by
\begin{equation}\label{eq:def S}
\forall\, F\in L_p(L_p(X)),\quad S(F)\eqdef \sum_{j=1}^n \Delta^{-1}\partial_j\hat{F}(\{j\}).
\end{equation}
Using this notation, Theorem~\ref{thm:UMD+} is nothing more than the
following operator norm bound.
$$
\|S\|_{L_p(L_p(X))\to L_p(X)}\le \beta_q^+(X^*).
$$
The adjoint operator $S^*:L_q(X^*)\to L_q(L_q(X^*))$ is given by
$$
\forall\, g\in L_q(X^*),\ \forall\, \d\in \c,\quad S^*(g)(\d)=\sum_{j=1}^n\d_j\Delta^{-1}\partial_jg.
$$
Therefore Theorem~\ref{thm:UMD+} has the following equivalent dual
formulation.
\begin{theorem}[Dual formulation of
Theorem~\ref{thm:UMD+}]\label{thm:dual UMD+} Let $Z$ be a
$\mathrm{UMD}^+$ Banach space. Then for every $q\in (1,\infty)$ and
every $g\in L_q(Z)$ we have
$$
\left(\int_\c \Bigg\|\sum_{j=1}^n \d_j\Delta^{-1}\partial_j g\Bigg\|^q_{L_q(Z)}d\mu(\d)\right)^{1/q}
\le \beta_q^+(Z)\|g\|_{L_q(Z)}.
$$
\end{theorem}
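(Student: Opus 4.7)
The plan is to derive Theorem~\ref{thm:dual UMD+} from Theorem~\ref{thm:UMD+} by applying the standard operator-theoretic identity $\|T\|=\|T^*\|$ to the operator $S$ from~\eqref{eq:def S}. The key step is to verify the formula for $S^*$ announced immediately before the statement of the theorem; once this formula is in hand, the remainder is a short duality calculation.

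To compute $S^*$, I would fix $F\in L_p(L_p(X))$, identified with a function $F:\c\times\c\to X$, and $g\in L_q(X^*)$, and expand the pairing $\int_\c\langle S(F)(\e),g(\e)\rangle d\mu(\e)$. Two ingredients drive the computation: the self-adjointness, with respect to the $L_p$-$L_q$ pairing, of the Walsh multipliers $\partial_j$ and $\Delta^{-1}$ (both have real eigenvalues; for $\partial_j$ this is verified directly by the change of variables $\e_j\mapsto-\e_j$, and for $\Delta^{-1}$ it follows from its diagonal action on the Walsh basis), and the identity $\hat F(\{j\})(\e)=\int_\c\d_jF(\e,\d)d\mu(\d)$ coming from the definition of the Fourier coefficient. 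Combining these with Fubini gives
$$
\int_\c\langle S(F)(\e),g(\e)\rangle d\mu(\e)=\int_\c\int_\c\left\langle F(\e,\d),\sum_{j=1}^n\d_j\Delta^{-1}\partial_jg(\e)\right\rangle d\mu(\e)d\mu(\d),
$$
identifying $S^*(g)(\d)=\sum_{j=1}^n\d_j\Delta^{-1}\partial_jg\in L_q(X^*)$, as claimed.

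To conclude, given a $\mathrm{UMD}^+$ space $Z$, I would apply Theorem~\ref{thm:UMD+} with $X=Z^*$, obtaining $\|S\|_{L_p(L_p(X))\to L_p(X)}\le \beta_q^+(Z^{**})$ (where $p=q/(q-1)$). Because $\mathrm{UMD}^+$ is a local property and $Z^{**}$ is finitely representable in $Z$ by the principle of local reflexivity, $\beta_q^+(Z^{**})=\beta_q^+(Z)$. Combining this with the isometric embedding $L_q(L_q(X^*))\hookrightarrow (L_p(L_p(X)))^*$, which is valid for every Banach space without any Radon-Nikod\'ym hypothesis (the supremum in H\"older is attained by an elementary approximation), together with the general bound $\|S^*(g)\|_{(L_p(L_p(X)))^*}\le \|S\|\,\|g\|_{L_q(Z)}$, yields
$$
\left(\int_\c\Bigg\|\sum_{j=1}^n\d_j\Delta^{-1}\partial_jg\Bigg\|_{L_q(Z)}^qd\mu(\d)\right)^{1/q}\le \beta_q^+(Z)\|g\|_{L_q(Z)},
$$
as desired. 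I do not anticipate any genuine obstacle: the adjoint computation is an unpacking of definitions, and the only points meriting care are the mild isometric-duality bookkeeping and the stability of the $\mathrm{UMD}^+$ constant under biduality, both of which are routine facts from Banach space theory.
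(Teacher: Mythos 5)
Your adjoint computation for $S$ is correct and matches what the paper itself records in Section~\ref{sec:dual}, and the duality bookkeeping over the (finite, discrete) measure $\mu$ is indeed harmless. The problem is the overall logical direction: you derive Theorem~\ref{thm:dual UMD+} from Theorem~\ref{thm:UMD+} via $\|S^*\|\le\|S\|$, but the paper's (and, absent a separate argument, the only available) proof of Theorem~\ref{thm:UMD+} is precisely by establishing Theorem~\ref{thm:dual UMD+} first and then passing to the adjoint --- the paper states explicitly that ``Theorem~\ref{thm:dual UMD+}, and consequently also Theorem~\ref{thm:UMD+}, will be proven in Section~\ref{sec:UMD+}.'' So your argument is circular: it re-derives the equivalence of the two formulations (which Section~\ref{sec:dual} already sets up) without supplying a proof of either one.

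The missing content is the direct argument of Section~\ref{sec:UMD+}. For each permutation $\sigma\in S_n$ one considers the martingale $\{g_k^\sigma\}_{k=0}^n$ obtained by revealing the coordinates in the order $\sigma^{-1}(1),\ldots,\sigma^{-1}(n)$, so that $g_k^\sigma-g_{k-1}^\sigma=\sum_{\max\sigma(A)=k}\hat g(A)W_A$. Applying the $\mathrm{UMD}^+$ inequality~\eqref{eq:def UMD+} to this martingale with random signs $\d$, reindexing, averaging over $\sigma\in S_n$, and using convexity of the norm, one is left with the combinatorial identity that for $j\in A$ the fraction of $\sigma$ with $\max\sigma(A)=\sigma(j)$ equals $1/|A|$, which reproduces exactly the multiplier $\sum_{j\in A}\d_j/|A|$ defining $\sum_j\d_j\Delta^{-1}\partial_j g$. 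None of this appears in your proposal, so the analytic heart of the theorem is absent. (Even if one granted you Theorem~\ref{thm:UMD+} from elsewhere, you would additionally need $Z^{**}$ to be $\mathrm{UMD}^+$ with $\beta_q^+(Z^{**})=\beta_q^+(Z)$, an extra local-reflexivity step the paper's route avoids entirely; but this is moot given the circularity.)
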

Theorem~\ref{thm:dual UMD+}, and consequently also
Theorem~\ref{thm:UMD+}, will be proven in Section~\ref{sec:UMD+}.

Let $T$ be the restriction of $S$ to  $\Rad_{L_p(X)}$. Thus
$$
\mathfrak{P}_p^n(X)=\|T\|_{\Rad_{L_p(X)}\to L_p(X)}=\|T^*\|_{L_q(X^*)\to L_q(L_q(X^*))/\Rad_{L_q(X^*)}^\perp}.
$$
The adjoint $T^*:L_q(X^*)\to L_q(L_q(X^*))/\Rad_{L_q(X^*)}^\perp$ is
given by
$$
\forall\, g\in L_q(X^*),\ \forall\,\d\in\c,\quad T^*(g)= \sum_{j=1}^n\d_j\Delta^{-1}\partial_jg+\Rad_{L_q(X^*)}^\perp.
$$
Therefore Theorem~\ref{thm:generalized pisier}  has the following
equivalent dual formulation.
\begin{theorem}[Dual formulation of Theorem~\ref{thm:generalized
pisier}]\label{thm:dual gen pisier} Let $Z$ be a Banach space and
$q\in [1,\infty]$. Then for every $g\in L_q(Z)$ we have
\begin{multline*}
\inf_{\Phi\in \Rad_{L_q(Z)}^\perp}\left(\int_\c
\Bigg\|\Phi(\d)+\sum_{j=1}^n \d_j\Delta^{-1}\partial_j
g\Bigg\|^q_{L_q(Z)}d\mu(\d)\right)^{1/q}\\ \le
\left(\sum_{k=1}^{n}\frac{1}{k}\right)\|g\|_{L_q(Z)}.
\end{multline*}
\end{theorem}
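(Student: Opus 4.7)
The plan is to deduce Theorem~\ref{thm:dual gen pisier} from Theorem~\ref{thm:generalized pisier} by pure duality, using the operator-theoretic framework that the paper has set up in the paragraphs immediately preceding the statement. First I would observe that the infimum on the left-hand side of the statement is, by definition, the quotient norm of $\sum_{j=1}^n \d_j\Delta^{-1}\partial_j g$ in $L_q(L_q(Z))/\Rad_{L_q(Z)}^\perp$, which is precisely $\|T^*(g)\|$ when one identifies $Z=X^*$ for some Banach space $X$, with $T^*$ the adjoint of $T=S|_{\Rad_{L_p(X)}}$ computed in the excerpt. Since $\c$ is a finite probability space, both $L_p(X)$ and $L_q(X^*)$ are just finite direct sums, so the canonical identifications $L_p(X)^*=L_q(X^*)$ and $(\Rad_{L_p(X)})^*=L_q(L_q(X^*))/\Rad_{L_q(X^*)}^\perp$ hold verbatim for every Banach space $X$ and every $p\in[1,\infty]$, with no Radon--Nikodym hypothesis required. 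The standard operator-duality identity $\|T^*\|=\|T\|$, combined with $\|T\|=\mathfrak{P}_p^n(X)$ as recorded in the paper, and Theorem~\ref{thm:generalized pisier}, then yields the desired inequality whenever $Z$ is a dual Banach space.

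To pass from dual Banach spaces to an arbitrary $Z$, I would use a finite-dimensional reduction. Given $g\in L_q(Z)$, set $Z_0\eqdef\operatorname{span}\{g(\e):\e\in\c\}\subseteq Z$; this is finite-dimensional, hence reflexive, hence identifiable with $(Z_0^*)^*$. Applying the step above with $X=Z_0^*$ produces some $\Phi\in\Rad_{L_q(Z_0)}^\perp$ satisfying the target inequality with all norms taken in $L_q(L_q(Z_0))$ and the RHS equal to $(\sum_{k=1}^n 1/k)\|g\|_{L_q(Z_0)}$. Since the inclusion $Z_0\hookrightarrow Z$ is isometric and the function $\sum_{j}\d_j\Delta^{-1}\partial_j g$ already takes values in $Z_0$, the same $\Phi$ automatically belongs to $\Rad_{L_q(Z)}^\perp$, every norm in the display transfers verbatim to its $L_q(L_q(Z))$-counterpart, and $\|g\|_{L_q(Z_0)}=\|g\|_{L_q(Z)}$, so the inequality extends to $Z$.

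The essential mathematical content is entirely housed in Theorem~\ref{thm:generalized pisier}, which requires a separate (non-dualization) argument; the duality passage above is purely formal. The only potential nuisance is care at the endpoint exponents $q\in\{1,\infty\}$, where $L_p(X)^*=L_q(X^*)$ could fail in general, but this causes no genuine issue here precisely because $\c$ is finite. Consequently, assuming Theorem~\ref{thm:generalized pisier} and the adjoint computation already provided, the proof of Theorem~\ref{thm:dual gen pisier} amounts to the two short observations above.
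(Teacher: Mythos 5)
There is a genuine gap: your argument is circular relative to the logical structure of the paper. You reduce Theorem~\ref{thm:dual gen pisier} to Theorem~\ref{thm:generalized pisier} by duality, but the paper never proves Theorem~\ref{thm:generalized pisier} independently --- it is obtained \emph{only as a consequence of} Theorem~\ref{thm:dual gen pisier} (this is exactly why Section~\ref{sec:dual} sets up the operator $T$ and records $\mathfrak{P}_p^n(X)=\|T\|=\|T^*\|$: the dual formulation is the one that admits a direct proof, and the primal one follows). Your duality passage itself is sound --- on the finite probability space $\c$ the identifications $L_p(X)^*=L_q(X^*)$ and $(\Rad_{L_p(X)})^*=L_q(L_q(X^*))/\Rad_{L_q(X^*)}^\perp$ are unproblematic even at the endpoints, and your finite-dimensional reduction to handle a non-dual target $Z$ is correct --- but it only re-establishes the \emph{equivalence} of the two formulations, which the paper already states in Section~\ref{sec:dual}. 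You explicitly defer ``the essential mathematical content'' to Theorem~\ref{thm:generalized pisier}, so no proof of the harmonic-sum bound $\sum_{k=1}^n\frac1k$ is actually supplied for either formulation.

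The missing ingredient is the construction in Section~\ref{sec:generalized pisier proof}: one introduces the auxiliary function $G_t\in L_q(L_q(Z))$ of Lemma~\ref{lem:auxiliary}, verifies that $\Rad(G_t)(\d)=\sum_{A\neq\emptyset}t^{|A|-1}\sum_{j\in A}\d_j\hat g(A)W_A$ while $\|G_t\|_{L_q(L_q(Z))}\le\frac{1-t^n}{1-t}\|g\|_{L_q(Z)}$ (the latter because $(1-t)G_t$ is a convex-type combination of functions $g_B$ equidistributed with $g$), and then uses the identity $\frac{1}{|A|}=\int_0^1 t^{|A|-1}\,dt$ to write $\sum_j\d_j\Delta^{-1}\partial_j g=\Rad\bigl(\int_0^1 G_t\,dt\bigr)$. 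Taking $\Phi=\int_0^1G_t\,dt-\Rad\bigl(\int_0^1G_t\,dt\bigr)\in\Rad_{L_q(Z)}^\perp$ and noting $\int_0^1\frac{1-t^n}{1-t}\,dt=\sum_{k=1}^n\frac1k$ completes the proof. Some argument of this kind (or a direct proof of Theorem~\ref{thm:generalized pisier} by other means, which you do not give) is indispensable; without it your submission proves nothing beyond the formal duality already recorded in the paper.
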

Theorem~\ref{thm:dual gen pisier}, and consequently also
Theorem~\ref{thm:generalized pisier}, will be proven in
Section~\ref{sec:UMD+}. Since $[H,Y]_\theta^*=[H,Y^*]_\theta$
(see~\cite{BL76}), we also have the following equivalent dual
formulation of Theorem~\ref{thm:interpolation}.
\begin{theorem}[Dual formulation of
Theorem~\ref{thm:interpolation}]\label{thm:dual interpolation} Let
$H$ be a Hilbert space, $W$ a Banach space, and $\theta\in (0,1)$.
Set $Z=[H,W]_\theta$. Then for every $q\in (1,\infty)$ and $g\in
L_q(Z)$,
\begin{multline*}
\inf_{\Psi\in \Rad_{L_q(Z)}^\perp}\left(\int_\c
\Bigg\|\Psi(\d)+\sum_{j=1}^n \d_j\Delta^{-1}\partial_j
g\Bigg\|^q_{L_q(Z)}d\mu(\d)\right)^{1/q}\\ \leq
\frac{2\max\{q,q/(q-1)\}}{1-\theta} \|g\|_{L_q(Z)}.
\end{multline*}
\end{theorem}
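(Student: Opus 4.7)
The plan is to deduce Theorem~\ref{thm:dual interpolation} from Theorem~\ref{thm:interpolation} via the duality apparatus developed in Section~\ref{sec:dual}. The key identity from that section is $\mathfrak{P}_p^n(X) = \|T^*\|_{L_q(X^*) \to L_q(L_q(X^*))/\Rad_{L_q(X^*)}^\perp}$ with $q = p/(p-1)$, combined with the interpolation--duality identity $[H, W]_\theta^* = [H, W^*]_\theta$ (a consequence of $H$ being isometrically self-dual). Applying Theorem~\ref{thm:interpolation} to $X = [H, W^*]_\theta$ with exponent $q' = q/(q-1)$ yields $\mathfrak{P}_{q'}(X) \le 2\max\{q', q\}/(1-\theta) = 2\max\{q, q/(q-1)\}/(1-\theta)$. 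Since $X^* = [H, W^{**}]_\theta$ contains $Z = [H, W]_\theta$ isometrically, this translates, via the operator-norm identification above, into precisely the quotient estimate of Theorem~\ref{thm:dual interpolation}; possible non-reflexivity of $W$ is absorbed by the isometric embedding $L_q(W) \hookrightarrow L_q(W^{**})$.

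For Theorem~\ref{thm:interpolation} itself, the plan is a Calder\'on-style complex-interpolation argument. Given a competitor $F \in L_p(L_p(Z))$ (encoding a tuple $(f_1,\ldots,f_n)$ via the packaging in~\eqref{eq:def S}), use the definition of $Z = [H, W]_\theta$ to extend $F$ to an analytic family $F_z : \overline{\mathbb{S}} \to L_p(L_p(H+W))$ on the strip $\mathbb{S} = \{0 < \Re z < 1\}$, with $F_\theta = F$ and controlled boundary norms $\|F_{it}\|_{L_p(L_p(H))}, \|F_{1+it}\|_{L_p(L_p(W))} \le (1+\varepsilon) \|F\|_{L_p(L_p(Z))}$. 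Apply the operator $S$ from~\eqref{eq:def S} pointwise in $z$ to obtain an analytic family $z \mapsto SF_z$; a three-lines argument then transfers bounds on the two boundary lines to a bound at the interior point $z = \theta$.

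The main obstacle is producing dimension-free bounds at \emph{both} boundaries with the sharp dependence on $p$. On the Hilbert boundary $\Re z = 0$, orthogonality of the Walsh characters gives the sharp, dimension-free bound $\|S\|_{L_2(L_2(H)) \to L_2(H)} \le 1$ via Plancherel, and Kahane's inequality upgrades this to an $L_p(L_p(H)) \to L_p(H)$ estimate of order $\max\{p, p'\}$. The Banach boundary is more delicate: Theorem~\ref{thm:generalized pisier} alone yields only $O(\log n)$, producing a dimension-dependent $(\log n)^\theta$ under direct interpolation. The remedy is to insert a holomorphic family of Walsh multipliers $M_z$ into~\eqref{eq:def S}, normalized so that $M_\theta$ is the identity while at $\Re z = 1$ the modified operator $S \circ M_{1+it}$ admits a trivial, dimension-free bound on $L_p(L_p(W))$ for any Banach $W$. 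The factor $1/(1-\theta)$ in the final constant then arises as the Poisson-kernel weight on the $\Re z = 1$ side of the strip in Calder\'on's formula; balancing it against the $\max\{p, p'\}$ contribution from the Hilbert boundary yields the claimed constant $2\max\{p, p'\}/(1-\theta)$.
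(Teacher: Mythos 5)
Your overall strategy --- interpolating between a Hilbert-space $L_2$ bound coming from Walsh orthogonality and a trivial bound valid for arbitrary Banach spaces --- is the right one, and your first (duality) paragraph is essentially sound modulo the usual care with $W$ versus $W^{**}$ (the paper avoids this by proving the dual statement directly and deducing the primal one). But the core of your argument has a genuine gap at exactly the point where the theorem is hard. A single three-lines argument for $S$ requires a \emph{dimension-free} bound for the modified operator on the boundary line $\Re z=1$ for an arbitrary Banach space $W$, and you do not construct the family $M_z$ that is supposed to deliver this; Theorem~\ref{thm:generalized pisier} only gives $O(\log n)$ there, as you note, and it is far from clear that any holomorphic Walsh-multiplier modification with $M_\theta=I$ removes the logarithm (the logarithmic loss for general Banach spaces is genuine, by Talagrand's example). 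The paper's resolution is structurally different: it writes $1/|A|=\int_0^1 t^{|A|-1}\,dt$ and thereby decomposes the (dual) operator as $\int_0^1 V_t\,dt$, where each fixed $V_t$ is built from averages of measure-preserving substitutions and hence satisfies the dimension-free but non-integrable bound $\|V_t\|_{L_r(W)\to L_r(L_r(W))}\le 2/(1-t)$ for \emph{every} Banach space $W$ and every $r\in[1,\infty]$, while Walsh orthogonality gives $\|V_t\|_{L_2(H)\to L_2(L_2(H))}\le (1-t)^{-1/2}$. Interpolating \emph{for each fixed $t$} (first in the exponent to reach $L_q(H)$, then in the space to reach $Z=[H,W]_\theta$) yields $\|V_t\|_{L_q(Z)\to L_q(L_q(Z))}\le 2(1-t)^{-1+(1-\theta)\min\{1/q,1-1/q\}}$, which is integrable on $(0,1)$ with integral $2/\bigl((1-\theta)\min\{1/q,1-1/q\}\bigr)=2\max\{q,q/(q-1)\}/(1-\theta)$. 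No dimension-free bound for the full operator on the Banach boundary is ever needed, and the $1/(1-\theta)$ does not come from a Poisson-kernel weight but from this $t$-integration.

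A second, independent problem is your use of Kahane's inequality to pass from $\|S\|_{L_2(L_2(H))\to L_2(H)}\le 1$ to an $L_p(L_p(H))\to L_p(H)$ bound of order $\max\{p,p'\}$. Kahane--Khintchine compares $L_p$ and $L_2$ norms of Rademacher sums in the $\d$-variable only; the inner norm in the $\e$-variable involves a general $H$-valued function, for which $L_p$ and $L_2$ norms are not comparable. The paper instead obtains the $L_q(H)$ estimate by interpolating the $L_2(H)$ bound against the trivial $L_1$ or $L_\infty$ bound, which produces the exponent $\max\{1/q,1-1/q\}$ on $(1-t)^{-1}$ rather than a multiplicative constant $\max\{q,q'\}$; the factor $\max\{q,q/(q-1)\}$ in the final statement emerges only after integrating over $t$.
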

Theorem~\ref{thm:dual interpolation}, and consequently also
Theorem~\ref{thm:interpolation}, will be proven in
Section~\ref{sec:proof interpolation}.

\section{Proof of Theorem~\ref{thm:dual UMD+}}\label{sec:UMD+}
Fix $q\in (1,\infty)$ and $g\in L_q(Z)$. Let $S_n$ denote the
symmetric group on $\{1,\ldots,n\}$. For  $\sigma\in S_n$ and $k\in
\{0,\ldots,n\}$  define $g_k^\sigma\in L_q(Z)$ by
\begin{multline}\label{eq:def g sigma}
g_k^\sigma(\e)\eqdef \sum_{A
\subseteq
\left\{\sigma^{-1}(1),\ldots,\sigma^{-1}(k)\right\}}\hat{g}(A)W_A(\e)
\\=\frac{1}{2^{n-k}}\sum_{\d_{\sigma^{-1}(k+1)},\ldots,\d_{\s^{-1}(n)}\in
\{-1,1\}} g\left(  \sum_{i=1}^k
\e_{\sigma^{-1}(i)}e_{\sigma^{-1}(i)}+\sum_{i=k+1}^n
\d_{\sigma^{-1}(i)}e_{\sigma^{-1}(i)}   \right),
\end{multline}
where here, and in what follows, $e_1,\ldots,e_n$ denotes the
standard basis of $\R^n$. Then $\{g_k^\sigma\}_{k=0}^n$ is a
$Z$-valued martingale with $g^\sigma_n=g$ and
$g^\s_0=\hat{g}(\emptyset)$, implying that
\begin{multline}\label{eq;each sigma}
\left(\int_\c\Bigg\|\sum_{k=1}^n\d_k\left(g_{k}^\s-g_{k-1}^\s\right)
\Bigg\|^{q}_{L_{q}(Z)}d\mu(\d)\right)^{1/q} \le \beta_{q}^+(Z)
\left\|g\right\|_{L_{q}(Z)}.
\end{multline}

In \eqref{eq;each sigma} we may replace $\{\delta_k\}_{k=1}^n$  by $\{\delta_{\sigma^{-1}(k)}\}_{k=1}^n$, since these two sequences of signs have the same joint distribution. Then we make the change of variable $j=\sigma^{-1}(k)$, so that $k=\sigma(j)$. Averaging the resulting inequality over $\sigma\in S_n$, and using
the convexity of the norm, we see that
\begin{multline}\label{eq:minus hat}
\left(\int_\c\Bigg\|\frac{1}{n!}\sum_{\s\in
S_n}\sum_{j=1}^n\d_j\left(g^\sigma_{\sigma(j)}
-g^\sigma_{\sigma(j)-1}\right)\Bigg\|^{q}_{L_{q}(Z)}d\mu(\d)\right)^{1/q}
\\
\le \beta_{q}^+(Z) \left\|g\right\|_{L_{q}(Z)}.
\end{multline}
It remains to note that for each $\d\in\c$ we have
\begin{align}\label{eq:permutation identity}
&\nonumber\frac{1}{n!}\sum_{\s\in S_n}\sum_{j=1}^n\d_j\left(g^\sigma_{\sigma(j)}-g^\sigma_{\sigma(j)-1}\right)\\&\nonumber=
\frac{1}{n!}\sum_{\s\in S_n}\sum_{j=1}^n\d_j
\sum_{\substack{\emptyset \subsetneq A\subseteq\{1,\ldots,n\}\\\max\sigma(A)= \sigma(j) }}\hat{g}(A)W_A
\\\nonumber&=\sum_{\substack{A\subseteq \{1,\ldots,n\}\\A\neq \emptyset}}\sum_{j\in A}
\d_j\frac{|\{\s\in S_n:\max\sigma(A)=\sigma(j) \}|}{n!} \hat{g}(A)W_A\\&=
\sum_{\substack{A\subseteq \{1,\ldots,n\}\\A\neq \emptyset}}\frac{\sum_{j\in A} \d_j}{|A|}\hat{g}(A)W_A\nonumber\\
&= \sum_{j=1}^n\d_j\Delta^{-1}\partial_jg.
\end{align}
Due to~\eqref{eq:minus hat} and~\eqref{eq:permutation identity} the
proof of Theorem~\ref{thm:dual UMD+} is complete. \qed

\section{Proof of Theorem~\ref{thm:dual gen pisier}}\label{sec:generalized pisier proof}

The following lemma introduces an auxiliary function which is a
variant of a similar function that was used by Pisier
in~\cite{Pis86}.

\begin{lemma}\label{lem:auxiliary}
Let $Z$ be a Banach space. Fix $n\in \N$, $q\in [1,\infty]$ and
$t\in (0,1)$. For $g\in L_q(Z)$ define $G_t\in L_q(L_q(Z))$ by
\begin{equation}\label{eq:def new interpolated}
G_t(\d)\eqdef \frac{1}{1-t}\sum_{A\subseteq
\{1,\ldots,n\}}\hat{g}(A)W_A\prod_{i\in
A}\left(t+(1-t)\d_i\right)-\frac{t^n}{1-t}g.
\end{equation}
Then
\begin{equation}\label{eq;rad of interpolated}
\Rad(G_t)(\d)=\sum_{\substack{A\subseteq \n\\A\neq\emptyset}} t^{|A|-1}\sum_{j\in A}\d_j\hat{g}(A)W_A,
\end{equation}
and
\begin{equation}\label{eq:norm of interpolated}
\|G_t\|_{L_q(L_q(Z))}\le \frac{1-t^n}{1-t}\|g\|_{L_q(Z)}.
\end{equation}
\end{lemma}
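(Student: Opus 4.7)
For identity \eqref{eq;rad of interpolated}, I would start from the multinomial expansion
\[
\prod_{i\in A}\bigl(t+(1-t)\d_i\bigr)=\sum_{C\subseteq A}t^{|A|-|C|}(1-t)^{|C|}W_C(\d),
\]
substitute into \eqref{eq:def new interpolated}, and extract the Rademacher component in the $\d$ variable by keeping only $|C|=1$. The term $-t^n g/(1-t)$ is constant in $\d$, so it contributes $0$ to $\Rad$. Reindexing the surviving sum by pairs $(A,j)$ with $j\in A$ yields the claimed formula; this part is a routine Fourier manipulation.

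For the norm estimate \eqref{eq:norm of interpolated}, the key idea is to recognize the first summand of $G_t(\d)$ as a conditional expectation. I would introduce, for each $\d\in\c$, conditionally independent $\pm 1$ random variables $\eta_1,\ldots,\eta_n$ with $\eta_i=1$ almost surely when $\d_i=1$ and $\P(\eta_i=1\mid\d_i=-1)=t$, so that $\E[\eta_i\mid\d]=t+(1-t)\d_i$. Independence and linearity then give
\[
\sum_A\hat g(A)W_A(\e)\prod_{i\in A}\bigl(t+(1-t)\d_i\bigr)=\E\bigl[g(\e\cdot\eta)\bigm|\d\bigr],
\]
where $\e\cdot\eta\in\c$ denotes the coordinatewise product. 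Hence $(1-t)G_t(\d)(\e)=\E[g(\e\cdot\eta)\mid\d]-t^n g(\e)$.

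Set $B=\{i:\d_i=-1\}$ and note $\P(\eta=(1,\ldots,1)\mid\d)=t^{|B|}$. Pulling out this atom rewrites the identity as
\[
G_t(\d)(\e)=\frac{t^{|B|}-t^n}{1-t}\,g(\e)+\sum_{\eta\neq(1,\ldots,1)}\frac{\P(\eta\mid\d)}{1-t}\,g(\e\cdot\eta).
\]
Because $|B|\le n$ and $t\in(0,1)$ every coefficient is non-negative, and since $\sum_\eta\P(\eta\mid\d)=1$ their total equals $\tfrac{(t^{|B|}-t^n)+(1-t^{|B|})}{1-t}=\tfrac{1-t^n}{1-t}$. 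Each map $\e\mapsto\e\cdot\eta$ is a measure-preserving bijection of $\c$, so $\|g(\,\cdot\,\eta)\|_{L_q(Z)}=\|g\|_{L_q(Z)}$, and the triangle inequality yields $\|G_t(\d)\|_{L_q(Z)}\leq\tfrac{1-t^n}{1-t}\|g\|_{L_q(Z)}$ uniformly in $\d$, from which \eqref{eq:norm of interpolated} follows. The main subtlety — and the reason the bound is sharp — is to recognize that the correction $-t^n g/(1-t)$ is tuned precisely so that after cancelling against the atom at $\eta=(1,\ldots,1)$ the total mass of the convex-combination representation collapses from $(1+t^n)/(1-t)$ (what the triangle inequality applied naively would produce) to $(1-t^n)/(1-t)$.
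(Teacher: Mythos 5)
Your proof is correct and follows essentially the same route as the paper's: \eqref{eq;rad of interpolated} by expanding $\prod_{i\in A}(t+(1-t)\d_i)$ and keeping the degree-one part in $\d$, and \eqref{eq:norm of interpolated} by exhibiting $(1-t)G_t(\d)$ as a nonnegative combination, of total mass $1-t^n$, of measure-preserving rearrangements $\e\mapsto g(\e\cdot\eta)$ of $g$, followed by the triangle inequality. The only (cosmetic) difference is in the bookkeeping of that combination: the paper expands uniformly in $\d$ over subsets $B\subseteq\n$ with binomial weights $t^{|B|}(1-t)^{n-|B|}$, so that the correction $-t^ng$ cancels the $B=\n$ term exactly, whereas you condition on $\d$ and cancel $-t^n g$ against the atom at $\eta=(1,\ldots,1)$ of mass $t^{|\{i:\,\d_i=-1\}|}\ge t^n$, leaving a nonnegative remainder with the same total mass.
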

\begin{proof}
Identity~\eqref{eq;rad of interpolated} follows from~\eqref{eq:def
new interpolated} since for every $A\subseteq \n$,
$$
\Rad\left(\prod_{i\in A}\left(t+(1-t)\d_i\right)\right)=t^{|A|-1}(1-t)\sum_{j\in A}\d_j.
$$
To prove~\eqref{eq:norm of interpolated} observe that for every
$\e,\d\in \c$,
\begin{align}
&\nonumber(1-t)G_t(\d)(\e)
\\\label{eq:use def G_t}&=\sum_{A\subseteq \n}\hat{g}(A)W_A(\e)
\prod_{i=1}^n \left(t+(1-t)\d_i^{\1_A(i)}\right)-t^ng(\e)\\\nonumber
& = \sum_{A\subset\n}\hat{g}(A)W_A(\e)\sum_{B\subseteq \n} t^{|B|}(1-t)^{n-|B|}W_{A\setminus B}(\d)-t^ng(\e)\\
&= \sum_{B\subsetneq \n}t^{|B|}(1-t)^{n-|B|}\sum_{A\subset\n}\hat{g}(A)W_{A\cap B}(\e)
W_{A\setminus B}(\e\d)\nonumber\\
&=\sum_{B\subsetneq \n}t^{|B|}(1-t)^{n-|B|}g_B(\e,\d),\label{eq:expand H}
\end{align}
where in~\eqref{eq:use def G_t} we use~\eqref{eq:def new
interpolated} and in~\eqref{eq:expand H} for every $B\subset \n$ we
set
$$
g_B(\e,\d)\eqdef g\left(\sum_{j\in B}\e_je_j+\sum_{j\in \n\setminus B}\e_j\d_je_j\right).
$$
Since $g_B$ is equidistributed with $g$, it follows
from~\eqref{eq:expand H} that
\begin{equation*}
\frac{\|G_t\|_{L_q(L_q(Z))}}{\|g\|_{L_q(Z)}}\le \frac{1}{1-t}\sum_{B\subsetneq \n} t^{|B|}(1-t)^{n-|B|}
=\frac{1-t^n}{1-t}.\qedhere
\end{equation*}
\end{proof}
\begin{proof}[Proof of Theorem~\ref{thm:dual gen pisier}] Observe
that for every $\d\in \c$ we have
\begin{eqnarray}\label{eq:integrated identity}
\sum_{j=1}^n\d_j\Delta^{-1}\partial_j g&=&\sum_{\substack{A\subseteq
\n\\A\neq \emptyset}} \frac{1}{|A|}\sum_{j\in A} \d_j\hat{g}(A)W_A
\nonumber\\&=&\nonumber \sum_{\substack{A\subseteq \n\\A\neq
\emptyset}}\left(\int_0^1t^{|A|-1}dt\right)\sum_{j\in A}
\d_j\hat{g}(A)W_A\\&\stackrel{\eqref{eq;rad of
interpolated}}{=}&\Rad\left(\int_0^1G_t(\d)dt\right).
\end{eqnarray}
It follows that if we set
\begin{equation}\label{eq:def Phi}
\Phi\eqdef \int_0^1G_tdt-\Rad\left(\int_0^1G_tdt\right).
\end{equation}
then $\Phi\in \Rad_{L_q(Z)}^\perp$ and
\begin{multline*}
\left(\int_\c \Bigg\|\Phi(\d)+\sum_{j=1}^n \d_j\Delta^{-1}\partial_j
g\Bigg\|^q_{L_q(Z)}d\mu(\d)\right)^{1/q}\\
\stackrel{\eqref{eq:integrated identity}}{=}
\Bigg\|\int_0^1G_tdt\Bigg\|_{L_q(L_q(Z))}\stackrel{\eqref{eq:norm of
interpolated}}{\le}
\left(\int_0^1\frac{1-t^n}{1-t}dt\right)\|g\|_{L_q(Z)}.
\end{multline*}
It remains to note that  $
\int_0^1\frac{1-t^n}{1-t}dt=\sum_{k=0}^{n-1}\int_0^1t^kdt=\sum_{k=1}^n\frac{1}{k}.
$
\end{proof}

\section{Proof of Theorem~\ref{thm:dual
interpolation}}\label{sec:proof interpolation} For $t\in (0,1)$
define a linear operator $V_t:L_q(Z)\to L_q(L_q(Z))$ by
\begin{multline}\label{eq:def V_t}
 V_t(g)(\d)\eqdef G_t(\d)-\hat{G_t}(\emptyset)
\\
\stackrel{\eqref{eq:def new
interpolated}}{=}\frac{1}{1-t}\sum_{A\subset\n}
\hat{g}(A)W_A\left(\prod_{i\in A}(t+(1-t)\d_i)-t^{|A|} \right).
\end{multline}

\begin{lemma}\label{lem:Hilbert case}
Let $H$ be a Hilbert space. Then for every $t\in (0,1)$,
\begin{equation}\label{eq:root behavior}
\|V_t\|_{L_2(H)\to L_2(L_2(H))}{\leq}  \frac{1}{\sqrt{1-t^2}}\le \frac{1}{\sqrt{1-t}}.
\end{equation}
\end{lemma}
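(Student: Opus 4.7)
My plan is to exploit the Hilbert structure of $H$ via Parseval's identity in both the $\e$ and $\d$ variables. Since $H$ is Hilbert, $L_2(\c\times\c;H)$ is itself a Hilbert space in which $\{W_A(\e)W_B(\d):A,B\subseteq\n\}$ is an orthonormal basis, so I can compute $\|V_t(g)\|^2_{L_2(L_2(H))}$ explicitly and reduce the lemma to a scalar bound depending only on $|A|$.

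Concretely, I would first expand each factor appearing in~\eqref{eq:def V_t} in the Walsh basis of $\d$:
$$\prod_{i\in A}(t+(1-t)\d_i)=\sum_{B\subseteq A}t^{|A|-|B|}(1-t)^{|B|}W_B(\d).$$
Subtracting $t^{|A|}$ kills exactly the $B=\emptyset$ summand, so
$$(1-t)V_t(g)(\d)=\sum_{A\subseteq\n}\hat g(A)W_A(\e)\sum_{\emptyset\neq B\subseteq A}t^{|A|-|B|}(1-t)^{|B|}W_B(\d).$$
Parseval in both $\e$ and $\d$ then yields
$$(1-t)^2\|V_t(g)\|^2_{L_2(L_2(H))}=\sum_{A\subseteq\n}\|\hat g(A)\|_H^2\sum_{\emptyset\neq B\subseteq A}t^{2(|A|-|B|)}(1-t)^{2|B|},$$
and the binomial theorem collapses the inner sum to $(t^2+(1-t)^2)^{|A|}-t^{2|A|}$.

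The lemma thus reduces to the scalar inequality
$$(t^2+(1-t)^2)^n-t^{2n}\le \frac{(1-t)^2}{1-t^2}\qquad(n\ge 0),$$
because combining this with Parseval $\|g\|_{L_2(H)}^2=\sum_A\|\hat g(A)\|_H^2$ gives $\|V_t(g)\|^2\le\|g\|^2/(1-t^2)$. Setting $a=t^2+(1-t)^2$ and $b=t^2$, I would use the telescoping factorization $a^n-b^n=(a-b)\sum_{k=0}^{n-1}a^{n-1-k}b^k$ with $a-b=(1-t)^2$ and $a\le 1$; since $a^{n-1-k}\le 1$, the sum is dominated by the geometric series $\sum_{k=0}^{n-1}t^{2k}\le 1/(1-t^2)$, giving the desired scalar bound. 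The second inequality in the statement is then automatic from $1-t^2=(1-t)(1+t)\ge 1-t$.

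The argument is essentially a direct Parseval computation, so there is no substantial obstacle; the only mild subtlety is the bookkeeping in the double Walsh expansion and the telescoping step that avoids an $n$-dependence in the final constant. The conceptually important point is that Parseval is used in an essential way, so this Hilbert-space estimate does not extend to a general Banach space $Y$; presumably the general case of Theorem~\ref{thm:dual interpolation} will be handled by interpolating this sharp bound against a coarser estimate available for arbitrary $Y$.
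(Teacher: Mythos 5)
Your proposal is correct and is essentially the paper's own proof: the same Walsh expansion of $\prod_{i\in A}(t+(1-t)\d_i)$, Parseval/orthogonality to reduce to the scalar quantity $(t^2+(1-t)^2)^{|A|}-t^{2|A|}$, and the same telescoping factorization with $t^2+(1-t)^2\le 1$ to bound it by $(1-t)^2/(1-t^2)$. The only differences are cosmetic (your indexing of $B$ versus its complement, and bounding each Fourier level rather than taking a max over $|A|$).
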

\begin{proof}
Observe that for every $A\subset\n$ we have
\begin{multline}\label{eq:power diff}
\int_\c \left(\prod_{i\in
A}(t+(1-t)\d_i)-t^{|A|}\right)^2d\mu(\d)\\=\sum_{B\subsetneq
A}t^{2|B|}(1-t)^{2(|A|-|B|)}
=\left(t^2+(1-t)^2\right)^{|A|}-t^{2|A|}.
\end{multline}
It follows from~\eqref{eq:def V_t}, \eqref{eq:power diff}, and the
orthogonality of $\{W_A\}_{A\subseteq\n}$, that
\begin{equation}\label{eq:supremum}
\|V_t\|_{L_2(H)\to L_2(L_2(H))} = {\max_{a\in\{1,\ldots,n\}} }\frac{\sqrt{\left(t^2+(1-t)^2\right)^{a}-t^{2a}}}{1-t}.
\end{equation}
Now, for every $a\in \{1,\ldots,n\}$ and $t\in (0,1)$ we have
\begin{multline}\label{eq:geometric sum}
  \left(t^2+(1-t)^2\right)^{a}-t^{2a}= (1-t)^2\sum_{k=0}^{a-1}\left(t^2+(1-t)^2\right)^{a-1-k}t^{2k}\\
  \le (1-t)^2\sum_{k=0}^{a-1} t^{2k}=(1-t)^2\frac{1-t^{2a}}{1-t^2}\le \frac{1-t}{1+t},
\end{multline}
where in the first inequality of~\eqref{eq:geometric sum} we used
the  estimate $t^2+(1-t)^2\leq 1$, which holds for every $t\in
[0,1]$. The desired estimate~\eqref{eq:root behavior} now follows
from a substitution of~\eqref{eq:geometric sum}
into~\eqref{eq:supremum}.
\end{proof}


\begin{lemma}\label{lem:interpolation t bound}
Let $H$ be a Hilbert space and let $W$ be a Banach space. Fix
$\theta\in (0,1)$ and $q\in (1,\infty)$. Set $Z=[H,W]_\theta$. Then
for every $t\in (0,1)$ we have
\begin{equation}\label{eq:gained interpolated}
\left\|V_t\right\|_{L_q(Z)\to L_q(L_q(Z))}\le \frac{2}{(1-t)^{1-{(1-\theta)}\min\{1/q,1-1/q\}}}.
\end{equation}
\end{lemma}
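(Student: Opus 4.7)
My plan is a two-step complex interpolation argument. I will first lift the sharp Hilbert estimate of Lemma~\ref{lem:Hilbert case} from $L_2(H)$ to $L_q(H)$ for every $q\in(1,\infty)$, and then interpolate this bound against the crude general-Banach bound along the compatible couple $(H,W)$.

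\emph{Step 1 (Hilbert case at general $q$).} For any Banach space $Y$ and any $r\in[1,\infty]$, the decomposition $V_t(g)=G_t-\hat{G_t}(\emptyset)$ of~\eqref{eq:def V_t} together with Lemma~\ref{lem:auxiliary} yields
\[
\|V_t\|_{L_r(Y)\to L_r(L_r(Y))}\le \frac{2(1-t^n)}{1-t}\le \frac{2}{1-t}.
\]
Since $V_t$ acts consistently on every Bochner space $L_r(H)$, Calderón's theorem on complex interpolation of Bochner spaces (see~\cite{BL76}) applied to the couple $(L_2(H),L_{r_1}(H))$ -- with $r_1=\infty$ when $q\ge 2$ and $r_1=1$ when $q\le 2$ -- at the parameter $\eta\in[0,1]$ for which $L_q(H)=[L_2(H),L_{r_1}(H)]_\eta$, combines the sharp bound $(1-t)^{-1/2}$ of Lemma~\ref{lem:Hilbert case} with the crude bound $2/(1-t)$ to produce $2^\eta(1-t)^{-(1+\eta)/2}$. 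A short calculation using the constraint $1/q=(1-\eta)/2+\eta/r_1$ shows $(1+\eta)/2=1-\min\{1/q,1-1/q\}$ and $2^\eta\le 2$, whence
\[
\|V_t\|_{L_q(H)\to L_q(L_q(H))}\le \frac{2}{(1-t)^{1-\min\{1/q,1-1/q\}}}.
\]

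\emph{Step 2 (interpolation across $(H,W)$).} By hypothesis $(H,W)$ is a compatible couple with $[H,W]_\theta=Z$. Applying Calderón's theorem twice (once to Bochner spaces on $\c$ and once to Bochner spaces on $\c\times\c$, the latter isometric to $L_q(L_q(\cdot))$ by Fubini) gives $[L_q(H),L_q(W)]_\theta=L_q(Z)$ and $[L_q(L_q(H)),L_q(L_q(W))]_\theta=L_q(L_q(Z))$. The operator $V_t$ is defined by the same explicit formula on every Banach space and so extends consistently to these couples. The Calderón--Lions interpolation theorem, applied with the Step~1 bound at the $H$-endpoint and the bound $2/(1-t)$ at the $W$-endpoint, then yields
\[
\|V_t\|_{L_q(Z)\to L_q(L_q(Z))}\le \left(\frac{2}{(1-t)^{1-m}}\right)^{1-\theta}\left(\frac{2}{1-t}\right)^{\theta}=\frac{2}{(1-t)^{1-(1-\theta)m}},
\]
with $m=\min\{1/q,1-1/q\}$, which is exactly~\eqref{eq:gained interpolated}.

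The genuine mathematical content is entirely contained in Lemmas~\ref{lem:auxiliary} and~\ref{lem:Hilbert case}; everything else is bookkeeping of interpolation parameters. The one point worth highlighting is the necessity of the two separate steps: a single interpolation directly between $L_2(H)$ and $L_{q_1}(W)$ at parameter $\theta$ would impose $1/q=(1-\theta)/2+\theta/q_1$, which forces the interpolated exponent of $1/(1-t)$ to equal $(1+\theta)/2$ independently of $q$, reproducing the target bound only at $q=2$. Splitting the argument as above decouples the Hilbert savings from the interpolation across $(H,W)$ and recovers the correct $q$-dependence via the Hilbertian factor $\min\{1/q,1-1/q\}$. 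I do not foresee any substantive obstacle beyond this bookkeeping.
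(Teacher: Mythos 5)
Your proposal is correct and follows essentially the same route as the paper: the crude bound $2/(1-t)$ from Lemma~\ref{lem:auxiliary} valid for all $r$ and all Banach spaces, interpolation of that bound (at $r=\infty$ for $q\ge 2$, $r=1$ for $q\le 2$) against Lemma~\ref{lem:Hilbert case} to get $\|V_t\|_{L_q(H)\to L_q(L_q(H))}\le 2/(1-t)^{\max\{1/q,1-1/q\}}$, and then a second interpolation across the couple $(H,W)$ at parameter $\theta$ with $r=q$. Your exponent bookkeeping ($(1+\eta)/2=1-\min\{1/q,1-1/q\}$ and $(1-\theta)(1-m)+\theta=1-(1-\theta)m$) matches the paper's computation exactly.
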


\begin{proof}
For every $r\in [1,\infty]$ we have
\begin{multline*}
\|V_t(g)\|_{L_r(L_r(W))}\stackrel{\eqref{eq:def V_t}}{=}
\left\|G_t-\hat{G_t}(\emptyset)\right\|_{L_r(L_r(W))}\\\le
2\|G_t\|_{L_r(L_r(W))} \stackrel{\eqref{eq:norm of
interpolated}}{\le} \frac{2\|g\|_{L_r(W)}}{1-t}.
\end{multline*}
Consequently,
\begin{equation}\label{eq:trivial t}
\forall\, r\in [1,\infty],\quad \left\|V_t\right\|_{L_r(W)\to L_r(L_r(W))}\leq \frac{2}{1-t}.
\end{equation}
If $q\in [2,\infty)$ then we interpolate (see~\cite{BL76})
between~\eqref{eq:root behavior} and~\eqref{eq:trivial t} with $W=H$
and $r=\infty$. If $q\in (1,2]$ then we interpolate
between~\eqref{eq:root behavior} and~\eqref{eq:trivial t} with $W=H$
and $r=1$. The norm bound thus obtained implies the estimate
\begin{equation}\label{eq:L_qH}
\forall\, q\in (1,\infty),\quad \left\|V_t\right\|_{L_q(H)\to L_q(L_q(H))}\leq \frac{2}{(1-t)^{\max\{1/q,1-1/q\}}}.
\end{equation}
Finally, interpolation between~\eqref{eq:L_qH} and~\eqref{eq:trivial
t} with $r=q$ gives the desired norm bound~\eqref{eq:gained
interpolated}.
\end{proof}

\begin{proof}[Proof of Theorem~\ref{thm:dual interpolation}]
By~\eqref{eq:def V_t} we have $\Rad(V_t(g))=\Rad(G_t)$. Therefore,
analogously to~\eqref{eq:def Phi}, if we set
$$
\Psi\eqdef  \int_0^1V_t(g)dt-\Rad\left(\int_0^1G_tdt\right)=\int_0^1V_t(g)dt-\Rad\left(\int_0^1V_t(g)dt\right),
$$
then $\Psi\in \Rad_{L_q(Z)}^\perp$ and by~\eqref{eq:integrated
identity} for every $\d\in \c$ we have
\begin{equation}\label{eq:Psi identity}
\Psi(\d)+\sum_{j=1}^n\d_j\Delta^{-1}\partial_j g=\int_0^1V_t(g)(\d)dt.
\end{equation}
Hence,
\begin{eqnarray*}
&&\!\!\!\!\!\!\!\!\!\!\!\!\!\!\!\!\!\!\!\!\!\!\!\!\!\!\!\!\!\!\!\!\!\!\!\!\!\!\!\!\!\!\!\!\!\!\!\!\!\!\!\!\!\!\!\!\!\!\left(\int_\c \Bigg\|\Psi(\d)+\sum_{j=1}^n \d_j\Delta^{-1}\partial_j
g\Bigg\|^q_{L_q(Z)}d\mu(\d)\right)^{1/q}\\
&\stackrel{\eqref{eq:Psi identity}\wedge\eqref{eq:gained interpolated}}{\leq}&
\int_0^1\frac{2\|g\|_{L_q(Z)}}{(1-t)^{1-(1-\theta)\min\{1/q,1-1/q\}}}dt\\&=&\frac{2\|g\|_{L_q(Z)}}{(1-\theta)\min\{1/q,1-1/q\}}.
\end{eqnarray*}
This is precisely the assertion of Theorem~\ref{thm:dual
interpolation}.
\end{proof}

\section{Enflo type in uniformly smooth Banach spaces}\label{sec:enflo} A Banach space $X$ has Rademacher type $p\in [1,2]$ (see e.g.~\cite{Mau03}) if there exists $T_R\in (0,\infty)$ such that for all $n\in \N$ and all $x_1,\ldots,x_n\in X$,
\begin{equation}\label{eq:type def}
\int_\c \Bigg\|\sum_{j=1}^n \e_j x_j\Bigg\|^pd\mu(\e)\le T_R^p\sum_{j=1}^n \|x_j\|^p.
\end{equation}
$X$ has Enflo type $p$ (see~\cite{Enf76,BMW86,Pis86,NS02}) if there
exists $T_E\in (0,\infty)$ such that for all $n\in \N$ and all
$f:\c\to X$,
\begin{equation}\label{eq:enf type def}
\int_\c \frac{\left\|f(\e)-f(-\e)\right\|^p}{2^p}d\mu(\e)\le T_E^p\sum_{j=1}^n \|\partial_j f\|^p_{L_p(X)}.
\end{equation}

By considering the function $f(\e)=\sum_{j=1}^n\e_j x_j$ one sees
that~\eqref{eq:type def} is a special case of~\eqref{eq:enf type
def}. It is a long-standing open problem~\cite{Enf76} whether,
conversely, \eqref{eq:type def} implies \eqref{eq:enf type def}. A
crucial feature of~\eqref{eq:enf type def} is that it is a purely
metric condition (thus one can define when a {\em metric space} has
Enflo type $p$), while~\eqref{eq:type def} is a linear condition.
See~\cite{MN07-scaled} for a   purely metric condition (which is
more complicated than, but inspired by, Enflo type) that is known to
be equivalent to Rademacher type.

Observe that if~\eqref{eq:type def} holds then it follows
from~\eqref{eq:def extended pisier} that for every
$f_1,\ldots,f_n:\c\to X$,
\begin{equation}\label{eq:use type in gen pisier}
\Bigg\|\sum_{j=1}^n \Delta^{-1}\partial_j f_j\Bigg\|_{L_p(X)}\le T_R\mathfrak{P}_p^n(X)\left(\sum_{j=1}^n \|f_j\|_{L_p(X)}^p\right)^{1/p}.
\end{equation}
The special case $f_j=\partial_j f$ shows that~\eqref{eq:use type in
gen pisier} implies~\eqref{eq:enf type def} with $$T_E\le
T_R\mathfrak{P}_p^n(X).$$  For this reason it is worthwhile to
investigate~\eqref{eq:use type in gen pisier} on its own right.

Let $\mathfrak{Q}_p^n(X)$ be the infimum over those $\mathfrak{Q}\in
(0,\infty)$ such that every  $f_1,\ldots,f_n:\c\to X$ satisfy
\begin{equation}\label{eq:def Q const}
\Bigg\|\sum_{j=1}^n \Delta^{-1}\partial_j f_j\Bigg\|_{L_p(X)}\le \mathfrak{Q}\left(\sum_{j=1}^n \|f_j\|_{L_p(X)}^p\right)^{1/p}.
\end{equation}
We also set
$$
\mathfrak{Q}_p(X)\eqdef \sup_{n\in \N} \mathfrak{Q}_p^n(X).
$$
By duality, $\mathfrak{Q}_p^n(X)$ equals the infimum over those
$\mathfrak{Q}\in (0,\infty)$ for which every $g\in L_q(X^*)$
satisfies
\begin{equation}\label{eq:dual Q}
\left(\sum_{j=1}^n \left\|\Delta^{-1}\partial_j g\right\|_{L_q(X^*)}^q\right)^{1/q}\le \mathfrak{Q}\|g\|_{L_q(X^*)}.
\end{equation}

Letting $S_X=\{x\in X:\ \|x\|=1\}$ denote the unit sphere of $X$,
recall that the { modulus of uniform convexity} of $X$ is defined
for $\e\in [0,2]$ as
\begin{equation*}\label{def:convexity}
\delta_X(\e)=\inf\left\{ 1-\frac{\|x+y\|}{2}:\
x,y\in S_X,\ \|x-y\|=\e\right\}.
\end{equation*}
The {modulus of uniform smoothness} of $X$ is defined for $\tau\in
(0,\infty)$ as
\begin{equation*}\label{eq:def smoothness}
\rho_X(\tau)\eqdef \sup \left\{\frac{\|x+\tau y\|{+}\|x-\tau
y\|}{2}-1:\ x,y\in S_X\right\}.
\end{equation*}
These moduli  relate to each other via the following classical
duality formula of Lindenstrauss~\cite{Lin63}.
\begin{equation}\label{eq:lindenstrauss duality}
\delta_{X^*}(\e)=\sup\left\{\frac{\tau\e}{2}-\rho_X(\tau):\ \tau\in
[0,1]\right\}.
\end{equation}

\begin{theorem}\label{thm:Q with convexity}
For every $K,p\in (1,\infty)$ there exists $C(K,p)\in (0,\infty)$
such that if $X$ is a Banach space that satisfies $\rho_X(\tau)\le
K\tau^p$ for all $\tau\in (0,\infty)$, then $\mathfrak{Q}_p(X)\le
C(K,p)$.
\end{theorem}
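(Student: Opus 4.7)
The plan is to mirror the symmetrization-over-$S_n$ argument of Section~\ref{sec:UMD+}, substituting Pisier's classical martingale cotype inequality for the role played there by the $\mathrm{UMD}^+$ hypothesis. By the duality~\eqref{eq:dual Q}, writing $q=p/(p-1)$, it suffices to exhibit $C(K,p)$, independent of $n$, such that every $g\in L_q(X^*)$ satisfies
$$
\left(\sum_{j=1}^n \|\Delta^{-1}\partial_j g\|_{L_q(X^*)}^q\right)^{1/q}\le C(K,p)\,\|g\|_{L_q(X^*)}.
$$

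The first input is Lindenstrauss' duality formula~\eqref{eq:lindenstrauss duality}: the hypothesis $\rho_X(\tau)\le K\tau^p$ forces $\delta_{X^*}(\e)\ge c(K,p)\,\e^q$ for all $\e\in[0,2]$, so $X^*$ is uniformly convex of power type~$q$. By Pisier's theorem on martingales with values in uniformly convex spaces, $X^*$ therefore has martingale cotype~$q$: there exists $M=M(K,p)\in(0,\infty)$ such that every $X^*$-valued martingale $\{N_k\}_{k=0}^n\subset L_q(X^*)$ satisfies
$$
\left(\sum_{k=1}^n \|N_k-N_{k-1}\|_{L_q(X^*)}^q\right)^{1/q}\le M\,\|N_n\|_{L_q(X^*)}.
$$

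Given $g\in L_q(X^*)$ and $\sigma\in S_n$, apply this inequality to the $X^*$-valued martingale $\{g_k^\sigma\}_{k=0}^n$ from~\eqref{eq:def g sigma}, for which $g_n^\sigma=g$ and $g_0^\sigma=\hat{g}(\emptyset)$. Reindexing $k=\sigma(j)$ gives, for each $\sigma\in S_n$ separately,
$$
\sum_{j=1}^n \|g_{\sigma(j)}^\sigma-g_{\sigma(j)-1}^\sigma\|_{L_q(X^*)}^q\le M^q\,\|g\|_{L_q(X^*)}^q.
$$
The combinatorial identity already verified in~\eqref{eq:permutation identity} shows that, pointwise on $\c$,
$$
\frac{1}{n!}\sum_{\sigma\in S_n}\bigl(g_{\sigma(j)}^\sigma-g_{\sigma(j)-1}^\sigma\bigr)=\Delta^{-1}\partial_j g,
$$
so Jensen's inequality applied to the convex function $\|\cdot\|^q$ on $X^*$, integrated over $\c$, yields
$$
\|\Delta^{-1}\partial_j g\|_{L_q(X^*)}^q\le \frac{1}{n!}\sum_{\sigma\in S_n}\|g_{\sigma(j)}^\sigma-g_{\sigma(j)-1}^\sigma\|_{L_q(X^*)}^q.
$$
Summing over $j\in\{1,\ldots,n\}$ and using the per-$\sigma$ cotype bound above closes the estimate with $C(K,p)=M(K,p)$.

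The only nontrivial input is the invocation of Pisier's cotype~$q$ theorem for $X^*$; everything else is a direct transcription of the symmetrization-plus-convexity scheme of Section~\ref{sec:UMD+}. The structural point to notice is that the martingale cotype inequality already delivers an $\ell_q$-sum of martingale differences on its left-hand side, which is exactly what allows Jensen's inequality to push the average over $\sigma$ inside the $q$-th power and produce the $\ell_q$-sum $\sum_j\|\Delta^{-1}\partial_j g\|_{L_q(X^*)}^q$ demanded by the dual reformulation~\eqref{eq:dual Q}; in this respect the use of convexity runs in the opposite direction to that in Theorem~\ref{thm:dual UMD+}, where a Rademacher average on the left-hand side of the UMD$^+$ inequality had to be combined with the convexity of the norm in the more usual direction.
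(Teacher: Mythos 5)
Your proposal is correct and follows essentially the same route as the paper: the same reduction via the dual formulation~\eqref{eq:dual Q}, the same use of Lindenstrauss duality and Pisier's martingale cotype inequality applied to the martingales $\{g_k^\sigma\}_{k=0}^n$, the same reindexing $k=\sigma(j)$ and averaging over $S_n$ combined with the permutation identity. Your termwise Jensen step is just the paper's ``convexity of the norm'' step written out coordinate by coordinate, so there is nothing further to add.
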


\begin{proof} We shall use here the notation introduced in the proof of Theorem~\ref{thm:dual UMD+} (Section~\ref{sec:UMD+}). It follows from~\eqref{eq:lindenstrauss duality} that $\d_{X^*}(\e){\gtrsim}_{K,p} \e^q$ for every $\e\in [0,2]$ (here, and it what follows, the notation $\lesssim_{K,p}$ suppresses constant factors that may depend only on $K$ and $p$). Hence, for $g\in L_q(X^*)$ and $\sigma\in S_n$, since $\{g_k^\sigma\}_{k=0}^n$, as defined in~\eqref{eq:def g sigma}, is an
$X^*$-valued martingale, it follows from Pisier's martingale
inequality~\cite{Pis75} that
\begin{equation}\label{eq:to average end}
\left(\sum_{k=1}^n \|g_k^\sigma-g_{k-1}^\sigma\|_{L_q(X^*)}^q\right)^{1/q}\lesssim_{K,p} \|g\|_{L_q(X^*)}.
\end{equation}
By reindexing~\eqref{eq:to average end} with $k=\sigma(j)$, averaging over $\s\in S_n$, and using
the convexity of the norm, we obtain the estimate
\begin{equation}\label{eq:averaged end}
\left(\sum_{j=1}^n \Bigg\|\frac{1}{n!}\sum_{\s\in S_n}\left(g^\sigma_{\sigma(j)}-g^\sigma_{\sigma(j)-1}\right)\Bigg\|_{L_q(X^*)}^q\right)^{1/q}
\lesssim_{K,p}  \|g\|_{L_q(X^*)}.
\end{equation}
Arguing as in~\eqref{eq:permutation identity}, for every $j\in \n$
we have the identity
\begin{multline}\label{eq;perumtation identity end}
\frac{1}{n!}\sum_{\s\in
S_n}\left(g^\sigma_{\sigma(j)}-g^\sigma_{\sigma(j)-1}\right)=\frac{1}{n!}\sum_{\s\in
S_n}\sum_{\substack{\emptyset \subsetneq
A\subseteq\{1,\ldots,n\}\\\max
\sigma(A)=\s(j)}}\hat{g}(A)W_A\\=\sum_{\substack{A\subset\n\\ j\in
A}}\frac{|\{\s\in S_n:\
\max\s(A)=\s(j)\}|}{n!}\hat{g}(A)W_A=\Delta^{-1}\partial_j g.
\end{multline}
Consequently, \eqref{eq:averaged end} combined
with~\eqref{eq;perumtation identity end} imply that~\eqref{eq:dual
Q} holds true with $\mathfrak{Q}\lesssim_{K,p} 1$. This concludes
the proof of Theorem~\ref{thm:Q with convexity}.
\end{proof}
\begin{remark}
{\em It follows from~\cite[Sec.~6]{KN06} that a Banach space $X$
satisfying the assumption of Theorem~\ref{thm:Q with convexity} has
Enflo type $p$. Theorem~\ref{thm:Q with convexity} can be viewed as
a generalization of this fact to yield the inequality~\eqref{eq:def
Q const}. In~\cite{NPSS06} it was shown that any Banach space
satisfying the assumption of Theorem~\ref{thm:Q with convexity}
actually has K. Ball's Markov type $p$ property~\cite{Bal92}, a
property which is a useful strengthening of Enflo type $p$.}
\end{remark}

\subsection*{Acknowledgements}
T. H. is supported in part by the European Union through the ERC
Starting Grant ``Analytic probabilistic methods for borderline
singular integrals", and by the Academy of Finland through projects
130166 and 133264. A.~N. is supported in part by NSF grant
CCF-0832795, BSF grant
 2010021, the Packard Foundation and the Simons Foundation.
 Part of this work was completed while A. N. was visiting Universit\'e Pierre et Marie Curie, Paris, France.

\end{document}